\documentclass[reqno]{amsart}
\usepackage{hyperref}

\def\N{{\mathbb{N}}}

\def\R{{\mathbb{R}}}



\begin{document}
\title[Pontryagin principles]{Pontryagin principles in infinite horizon in presence of asymptotical constraints}

\author[BLOT and NGO]
{Jo\"{e}l BLOT and Thoi Nhan NGO}

\address{Jo\"{e}l Blot: Laboratoire SAMM EA 4543,\newline
Universit\'{e} Paris 1 Panth\'{e}on-Sorbonne, centre P.M.F.,\newline
90 rue de Tolbiac, 75634 Paris cedex 13,
France.}
\email{blot@univ-paris1.fr}
\address{Thoi Nhan Ngo: Laboratoire SAMM EA 4543,\newline
Universit\'{e} Paris 1 Panth\'{e}on-Sorbonne, centre P.M.F.,\newline
90 rue de Tolbiac, 75634 Paris cedex 13,
France.}
\email{ngothoinhan@gmail.com}
\date{November 5, 2015}
\begin{abstract} We establish necessary conditions of optimality for discrete-time infinite-horizon optimal control in presence of constraints at infinity. These necessary conditions are in form of weak and strong Pontryagin principles. We use a functional analytic framework and multipliers rules in Banach (sequence) spaces. We establish new properties on Nemytskii operators in sequence spaces. We also provide sufficient conditions of optimality.
\end{abstract}

 \maketitle
\numberwithin{equation}{section}
\newtheorem{theorem}{Theorem}[section]
\newtheorem{lemma}[theorem]{Lemma}
\newtheorem{example}[theorem]{Example}
\newtheorem{remark}[theorem]{Remark}
\newtheorem{definition}[theorem]{Definition}
\newtheorem{corollary}[theorem]{Corollary}
\newtheorem{proposition}[theorem]{Proposition}

\noindent
{MSC 2010:}  49J21, 65K05, 39A99.\\
{Key words:} infinite-horizon optimal control, discrete time
\section{Introduction}
The aim of this paper is to establish necessary conditions of optimality in the form of Pontryagin principles for the following Optimal Control problem
\[
(P)
\left\{
\begin{array}{rl}
{\rm Maximize} & K(\underline{y}, \underline{u}) := \sum_{t = 0}^{+ \infty} \beta^t  \psi(y_t,u_t)\\
{\rm when} & \underline{y} :=(y_t)_{t \in \N} \in (\R^n)^{\N},  \underline{u} :=(u_t)_{t \in \N} \in U^{\N}\\
\null &  y_0 = \eta, \; \lim_{t \rightarrow + \infty} y_t = y_{\infty}, \; \underline{u} \; {\rm is} \; {\rm bounded}\\
\null & \forall t \in \N, y_{t+1} = g(y_t, u_t)
\end{array}
\right.
\]
where $\beta \in (0,1)$, $U \subset \R^d$ is nonempty, $\psi : \R^n \times U \rightarrow \R$ is a function, $\eta$ and $y_{\infty}$ are fixed vectors of $\R^n$, $g : \R^n \times U \rightarrow \R^n$ is a function, and $(\R^n)^{\N}$ (respectively $U^{\N}$) denotes the set of the sequences in $\R^n$ (respectively $U$). In comparison with existing results on bounded processes, the specificity of the present work is the presence of the asymptotical constraint on the state variable: $\lim_{t \rightarrow + \infty} y_t = y_{\infty}$; its meaning is that the optimal state of the problem stays near a "good" state value on the long run.
\vskip2mm
Such problem in discrete time and infinite horizon arises in several fields of applications, for instance in optimal growth macroeconomic theory and in optimal management of forests and fisheries; see the references in \cite{BH2}.
\vskip1mm
Our approach is functional analytic; we translate our problems as static of optimization in suitable Banach sequence spaces.  
\vskip2mm
Now we describe the contents of the paper. In Section 2 we introduce a problem of optimal control which is equivalent to the initial problem in order to use classical sequence spaces: $c_0(\N,\R^n)$ the space of the sequences into $\R^n$ which converge to zero at infinity, and ${\ell}^{\infty}(\N,U)$  the space of the sequences into $U$ which are bounded.\\
In Section 3 we study properties of operators and functionals on sequence spaces. A first novelty is a characterization of the operators which send $c_0(\N,\R^n) \times {\ell}^{\infty}(\N,U)$ into $c_0(\N,\R^n)$ (Theorem \ref{th31}). The other results use this characterization and existing results on Nemytskii operators from $ {\ell}^{\infty}(\N,\R^n) \times  {\ell}^{\infty}(\N,U)$ into $ {\ell}^{\infty}(\N,\R^m)$. \\
Section 4 is devoted to the solutions which converge toward zero of a linear difference equation. These results are useful to establish regularity properties of the differential of operators which formalize the nonlinear difference equation which governs the system.\\
In Section 5 we establish a variation of a Karush-Kuhn-Tucker theorem which is useful for weak Pontryagin principles and we recall a result which is useful for strong Pontryagin principles.
\vskip1mm
\noindent
In Section 6 and Section 7 (respectively Section 8 and Section 9) we establish weak (respectively strong) Pontryagin principles. \\
In Section 10 and Section 11, we establish results of sufficient condition of optimality.
\section{An equivalent problem}
In this section we formulate a problem which is equivalent to Problem (P) for which we can work in classical Banach sequence spaces.\\
We consider the following Optimal Control problem
\[
( P1)
\left\{
\begin{array}{rl}
{\rm Maximize} & J(\underline{x}, \underline{u}) := \sum_{t = 0}^{+ \infty} \beta^t  \phi(x_t,u_t)\\
{\rm when} & \underline{x} :=(x_t)_{t \in \N} \in c_0(\N,\R^n),  \underline{u} :=(u_t)_{t \in \N} \in {\ell}^{\infty}(\N,U)\\
\null &  x_0 = \sigma\\
\null & \forall t \in \N, x_{t+1} = f(x_t, u_t).
\end{array}
\right.
\]
When we choose $\phi : \R^n \times U \rightarrow \R$ as $\phi(x,u) = \psi(x + y_{\infty},u)$, $f(x,u) = g(x + y_{\infty}, u) - y_{\infty}$, $x_t = y_t - y_{\infty}$ for all $t \in \N$, $\sigma = \eta - y_{\infty}$, Problem $({\mathcal P1})
$ is equivalent to Problem $({\mathcal P})$. And so our strategy for the sequel of the paper is to work on (P1) and to translate the results on $({\mathcal P1})$ into results on (P).\\
For the properties of $c_0(\N, \R^n)$ we refer to Section 15.3 in \cite{AB}, and for those of the space ${\ell}^{\infty}(\N, U)$ we refer to Section 15.7 in \cite{AB}.
\section{Nonlinear operators and functionals}
This section is devoted to the study of several operators between sequence spaces; notably the Nemytskii operators (also called superposition operators), and to the study of the functionals which define the criterium of our maximization problems. We establish results of continuity and of Fr\'echet differentiability.
\vskip1mm
\begin{theorem}\label{th31}
Let $X$, $V$, $W$ be three real normed spaces, $U$ be a nonempty subset of $V$, and  $F : X \times U \rightarrow W$ be a mapping such that, for all $x \in X$, the partial mapping $F(x, \cdot)$ transforms the bounded subsets of $U$ into bounded subsets of $W$. Then the following assertions are equivalent.
\begin{enumerate}
\item [(i)] $\forall \underline{x} \in c_0(\N, X)$, $\forall \underline{u} \in  {\ell}^{\infty}(\N, U)$, $(F(x_t,u_t))_{t \in \N} \in c_0(\N,W)$.
\item[(ii)] For all nonempty bounded subset $B$ in $U$, $\lim_{x \rightarrow 0}( \sup_{u \in B} \Vert F(x, u) \Vert) = 0$.
\end{enumerate}
\end{theorem}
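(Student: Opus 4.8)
The plan is to prove the two implications separately, treating the easy direction (ii) $\Rightarrow$ (i) first and obtaining the converse by contraposition. At the outset I would record that the standing hypothesis—each partial map $F(x,\cdot)$ sends bounded sets to bounded sets—guarantees that for every nonempty bounded $B \subseteq U$ and every $x \in X$ the quantity $\sup_{u \in B}\Vert F(x,u)\Vert$ is finite, so that statement (ii) is well-posed. This is, in fact, the only place the hypothesis is genuinely needed.

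For (ii) $\Rightarrow$ (i), I would take arbitrary $\underline{x} \in c_0(\N,X)$ and $\underline{u} \in {\ell}^{\infty}(\N,U)$ and set $B := \{u_t : t \in \N\}$, a nonempty bounded subset of $U$. Fixing $\ve > 0$, statement (ii) yields $\delta > 0$ with $\sup_{u \in B}\Vert F(x,u)\Vert < \ve$ whenever $\Vert x\Vert < \delta$. Since $x_t \to 0$ there is $N$ with $\Vert x_t\Vert < \delta$ for $t \geq N$, and then $\Vert F(x_t,u_t)\Vert \leq \sup_{u \in B}\Vert F(x_t,u)\Vert < \ve$ because $u_t \in B$. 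Hence $F(x_t,u_t) \to 0$, i.e. $(F(x_t,u_t))_{t \in \N} \in c_0(\N,W)$.

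For the converse (i) $\Rightarrow$ (ii), I would argue by contraposition: assuming (ii) fails I produce sequences violating (i). The failure of (ii) provides a nonempty bounded $B \subseteq U$ and $\ve_0 > 0$ such that $\sup_{u \in B}\Vert F(x,u)\Vert$ does not tend to $0$ as $x \to 0$; concretely, for each $k$ there is $x_k$ with $\Vert x_k\Vert < 1/k$ yet $\sup_{u \in B}\Vert F(x_k,u)\Vert \geq \ve_0$. By definition of the supremum I then select $u_k \in B$ with $\Vert F(x_k,u_k)\Vert \geq \ve_0/2$. Setting $\underline{x} := (x_k)_k$ and $\underline{u} := (u_k)_k$, I have $x_k \to 0$ so $\underline{x} \in c_0(\N,X)$, while $u_k \in B$ gives $\underline{u} \in {\ell}^{\infty}(\N,U)$; yet $\Vert F(x_k,u_k)\Vert \geq \ve_0/2$ for all $k$ prevents $(F(x_k,u_k))_k$ from lying in $c_0(\N,W)$, contradicting (i).

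The argument is elementary, so I do not anticipate a serious analytic obstacle; the only points demanding care are bookkeeping ones. I must make sure (ii) is read with finite suprema—this is exactly where the boundedness hypothesis on each $F(x,\cdot)$ enters—and in the contraposition I must correctly negate the limit condition to extract a sequence $x_k \to 0$ \emph{together with a single fixed} $B$ on which the supremum stays bounded away from $0$. The structural heart of the statement is the asymmetry between the two arguments of $F$: decay of $x_t$ in $c_0$ versus mere boundedness of $u_t$ in ${\ell}^{\infty}$ is precisely what makes the uniform-over-$B$ condition (ii) the right reformulation of (i).
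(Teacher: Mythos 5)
Your proof is correct and is in substance the same as the paper's: the (ii) $\Rightarrow$ (i) direction is identical (set $B := \{u_t : t \in \N\}$ and dominate $\Vert F(x_t,u_t)\Vert$ by the supremum over $B$), and your contrapositive treatment of (i) $\Rightarrow$ (ii) — extracting $x_k \to 0$ together with near-supremum controls $u_k \in B$ — is just the contrapositive packaging of the paper's direct argument, which for an arbitrary sequence in $c_0(\N,X)$ selects $u_t \in B$ with $\sup_{u \in B}\Vert F(x_t,u)\Vert \leq \Vert F(x_t,u_t)\Vert + \frac{1}{t+1}$ and then invokes the sequential characterization of limits in normed spaces. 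Both hinge on the same selection-of-near-maximizers idea and on the standing hypothesis guaranteeing the suprema are finite, so there is no gap and no genuinely different route.
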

\begin{proof} ${\bf (i \Longrightarrow ii)}$ Let $B$ be a nonempty bounded subset of $U$. Let $\underline{x} \in c_0(\N, X)$. From the assumption on $F$, we know that, for all $t \in \N$, we have $\sup_{u \in B} \Vert F(x_t, u) \Vert < + \infty$. Therefore, for all $t \in \N$, there exists $u_t \in B$ such that 
$$0 \leq \sup_{u \in B} \Vert F(x_t, u) \Vert \leq \Vert F(x_t, u_t) \Vert + \frac{1}{t+1}.$$
Since, for all $t \in \N$, $u_t \in B$, we have $\underline{u} \in {\ell}^{\infty}(\N, U)$. Then using (i), we obtain \\
$\lim_{t \rightarrow + \infty} \Vert F(x_t,u_t) \Vert = 0$, and from the previous inequality we obtain \\
 $\lim_{t \rightarrow + \infty} (\sup_{u \in B} \Vert F(x_t, u) \Vert) = 0$, and since we work in normed spaces we can use the sequential characterization of the limit and assert that we obtain (ii).\\
${\bf (ii \Longrightarrow i)}$ Let $\underline{x} \in c_0(\N, X)$ and $\underline{u} \in  {\ell}^{\infty}(\N, U)$. Then the subset $B := \{ u_t : t \in \N \}$ is bounded, and, for all $t \in \N$, the following inequality holds:
$$0 \leq \Vert F(x_t, u_t) \Vert \leq  \sup_{u \in B} \Vert F(x_t, u) \Vert,$$
and from (ii), since $\lim_{t \rightarrow + \infty} x_t = 0$, we obtain $\lim_{t \rightarrow + \infty} (\sup_{u \in B} \Vert F(x_t, u) \Vert) = 0$, and from the previous inequality we deduce $\lim_{t \rightarrow + \infty} \Vert F(x_t,u_t) \Vert = 0$, i.e. the  sequence $(F(x_t,u_t))_{t \in \N}$ belongs to $ c_0(\N,Y)$.
\end{proof} 
\begin{remark}\label{rem32} Assertion (i) of Theorem \ref{th31} permits to define the Nemytskii operator
$$N_F :  c_0(\N, X) \times {\ell}^{\infty}(\N, U) \rightarrow c_0(\N,W), N_F((x_t)_{t \in \N}, (u_t)_{t \in \N}) := (F(x_t,u_t))_{t \in \N}.$$
\end{remark}
\begin{remark}\label{rem33}
We set $B_R := \{ v \in V : \Vert v \Vert \leq R \}$ when $R \in (0, + \infty)$.
In the setting of Theorem \ref{th31}, the assumption on $F$ is equivalent to the following condition: $\forall x \in X$, $\forall R \in (0, + \infty)$, $\sup_{ u \in B_R \cap U} \Vert F(x, u) \Vert < + \infty$, and the assertion (ii) is equivalent to:  $\forall R \in (0, + \infty)$, $\lim_{x \rightarrow 0} (\sup_{u \in B_R \cap U } \Vert F(x, u) \Vert) = 0$. \\
Also note that assumption (ii) and the continuity of $F( \cdot, u)$ for all $u \in U$ imply $F(0,u) = 0$ for all $u \in U$, since, for all $u \in \R^d$, $\{ u\}$ is a nonempty bounded subset and $0 = \lim_{x \rightarrow 0} \Vert F(x,u) \Vert = \Vert F(0,u) \Vert$.
\end{remark}
\begin{remark}\label{rem34} In the setting of Theorem \ref{th31}, if in addition we assume that dim$V < + \infty$ and $U$ is closed, using the relative compactness of bounded subsets of $U$, if $F(x, \cdot) \in  C^0( U,W)$ (the space of continuous mappings from $U$ into $W$), $F(x, \cdot)$ transforms the bounded sets into bounded sets.
\end{remark}
\begin{theorem}\label{th35} Let $U$ be a nonempty closed subset of $\R^d$. Let $F \in C^0(\R^n \times U, \R^m)$ such that $\lim_{x \rightarrow 0}( \sup_{u \in B_R \cap U } \Vert F(x, u) \Vert ) = 0$ for all $R \in (0, + \infty)$.\\
Then we have the continuity of the Nemytskii operator on $F$, i.e.\\
 $N_F \in C^0(c_0(\N, \R^n) \times {\ell}^{\infty}(\N, U), c_0(\N,\R^m))$.
\end{theorem}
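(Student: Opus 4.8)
The statement contains two assertions: that $N_F$ is well defined as a map into $c_0(\N,\R^m)$, and that it is continuous. For the first point I would observe that the hypothesis $\lim_{x \rightarrow 0}(\sup_{u \in B_R \cap U} \Vert F(x,u) \Vert) = 0$ for all $R \in (0,+\infty)$ is exactly the reformulation of assertion (ii) of Theorem \ref{th31} given in Remark \ref{rem33}. Moreover, since $U$ is closed in the finite-dimensional space $\R^d$ and $F(x,\cdot)$ is continuous, Remark \ref{rem34} shows that $F(x,\cdot)$ sends bounded sets to bounded sets, so the standing assumption of Theorem \ref{th31} holds. Hence the implication $(ii) \Longrightarrow (i)$ guarantees that $(F(x_t,u_t))_{t \in \N} \in c_0(\N,\R^m)$ for every $(\underline{x},\underline{u}) \in c_0(\N,\R^n) \times {\ell}^{\infty}(\N,U)$, so that $N_F$ indeed maps into $c_0(\N,\R^m)$.

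For continuity I would argue sequentially. Let $(\underline{x}^{(k)},\underline{u}^{(k)}) \rightarrow (\underline{x},\underline{u})$ in $c_0(\N,\R^n) \times {\ell}^{\infty}(\N,U)$. Convergent sequences in normed spaces are bounded, so there exist $R' , R \in (0,+\infty)$ with $\Vert x^{(k)}_t \Vert \leq R'$, $\Vert x_t \Vert \leq R'$ and $\Vert u^{(k)}_t \Vert \leq R$, $\Vert u_t \Vert \leq R$ for all $t,k \in \N$. Because $U$ is closed in $\R^d$, the set $B_R \cap U$ is closed and bounded, hence compact, and the same holds for the closed ball $\bar{B}_{R'}$ of radius $R'$ in $\R^n$; thus every pair $(x^{(k)}_t,u^{(k)}_t)$ and $(x_t,u_t)$ lies in the fixed compact set $Q := \bar{B}_{R'} \times (B_R \cap U) \subset \R^n \times U$. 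The continuous map $F$ is then uniformly continuous on $Q$ by the Heine--Cantor theorem.

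Given $\e > 0$, uniform continuity provides $\delta > 0$ such that $\Vert F(a,b) - F(a',b') \Vert < \e$ whenever $(a,b),(a',b') \in Q$ satisfy $\Vert a - a' \Vert + \Vert b - b' \Vert < \delta$. Choosing $k$ large enough that $\Vert \underline{x}^{(k)} - \underline{x} \Vert_{\infty} + \Vert \underline{u}^{(k)} - \underline{u} \Vert_{\infty} < \delta$ yields $\Vert F(x^{(k)}_t,u^{(k)}_t) - F(x_t,u_t) \Vert < \e$ for \emph{every} $t$ simultaneously, hence $\sup_{t \in \N} \Vert F(x^{(k)}_t,u^{(k)}_t) - F(x_t,u_t) \Vert \leq \e$, i.e. $N_F(\underline{x}^{(k)},\underline{u}^{(k)}) \rightarrow N_F(\underline{x},\underline{u})$ in the supremum norm of $c_0(\N,\R^m)$. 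The one delicate point, and the place where the closedness of $U$ together with $\dim V < +\infty$ is really used, is precisely that this estimate must hold uniformly in $t$: this is guaranteed only because all the arguments stay in a single compact set on which $F$ is uniformly continuous. Equivalently, one may phrase the argument as restricting the known continuous Nemytskii operator ${\ell}^{\infty}(\N,\R^n) \times {\ell}^{\infty}(\N,U) \rightarrow {\ell}^{\infty}(\N,\R^m)$ to the subset $c_0(\N,\R^n) \times {\ell}^{\infty}(\N,U)$, whose image lies in the closed subspace $c_0(\N,\R^m)$ carrying the induced norm; continuity into $c_0(\N,\R^m)$ then follows at once from continuity into ${\ell}^{\infty}(\N,\R^m)$.
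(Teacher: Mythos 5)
Your proof is correct, and it splits the same way the paper's does on the first half: the well-definedness of $N_F$ as a map into $c_0(\N,\R^m)$, obtained from Remarks \ref{rem33} and \ref{rem34} together with the implication (ii) $\Longrightarrow$ (i) of Theorem \ref{th31}, is exactly the paper's argument. Where you genuinely diverge is the continuity part. The paper does not argue directly: it introduces the operator $N^1_F$ on ${\ell}^{\infty}(\N,\R^n) \times {\ell}^{\infty}(\N,U)$, identifies this product isometrically with ${\ell}^{\infty}(\N,\R^n \times U)$, invokes Theorem A1.1 of \cite{BCr} to conclude that $N^1_F$ is continuous, and then gets continuity of $N_F$ by restriction --- which is precisely the ``equivalent phrasing'' you sketch in your closing sentences. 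Your main argument instead proves continuity from scratch: a convergent sequence is bounded, so all the terms $(x^{(k)}_t,u^{(k)}_t)$ and $(x_t,u_t)$ live in a single set $Q = \bar{B}_{R'} \times (B_R \cap U)$, which is compact because $\R^n$ and $\R^d$ are finite dimensional and $U$ is closed; Heine--Cantor then gives uniform continuity of $F$ on $Q$, and uniform continuity is exactly what makes the estimate $\Vert F(x^{(k)}_t,u^{(k)}_t) - F(x_t,u_t) \Vert < \varepsilon$ hold \emph{uniformly} in $t$, yielding convergence in the supremum norm (sequential continuity suffices since all spaces involved are metric). This is, in essence, the proof of the cited result of \cite{BCr} specialized to the present setting; what your route buys is a self-contained argument that needs neither the external reference nor the isometric-isomorphism step, at the cost of rehearsing the compactness and uniform-continuity machinery that the paper delegates to the citation. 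Both arguments are sound, and you correctly identified the one delicate point (uniformity in $t$) and where the hypotheses on $U$ enter.
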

\begin{proof}
First using Remark \ref{rem34}, the assumption of Theorem \ref{th31} is fulfilled. Using Remark \ref{rem33}, assertion (ii) of Theorem \ref{th31} is fulfilled, and using Theorem \ref{th31}, and Remark \ref{rem32}, the operator $N_F$ is well defined from $c_0(\N, \R^n) \times {\ell}^{\infty}(\N, U)$ into $c_0(\N,\R^m)$.
Since the bounded subsets of $\R^n \times U$ are relatively compacts, we can defined the other Nemytskii operator 
$$N^1_F : {\ell}^{\infty}(\N, \R^n) \times {\ell}^{\infty}(\N, U) \rightarrow {\ell}^{\infty}(\N,\R^m), N^1_F((x_t)_{t \in \N}, (u_t)_{t \in \N}) := (F(x_t,u_t))_{t \in \N}.$$
Since ${\ell}^{\infty}(\N, \R^n) \times {\ell}^{\infty}(\N, U)$ is isometrically isomorphic to 
${\ell}^{\infty}(\N, \R^n \times U)$, as a consequence of Theorem A1.1 in \cite{BCr} (p. 22), we can assert that $N^1_F$ is continuous, and then $N_F$ is continuous as a restriction of a continuous operator.
\end{proof}
Let $X$, $V$, $W$ be real Banach spaces, and $U$ be a nonempty subset of $V$. Let $F : X \times U \rightarrow W$ be a mapping. we say that $F$ is of class $C^1$ on $ X \times U$ when there exist an open subset $U_1$ in $ V$ such that $U \subset U_1$ and a mapping $F_1 \in C^1(X \times U_1 ,  W)$ such that ${F_1}_{\mid_{ X \times U}} = F$. Such a definition is common in the differential theories; see e.g. \cite{Mi} (p. 1).
\begin{remark}\label{rem36}
When $F_1, F_2 \in C^1(X \times U_1 , W)$ such that ${F_1}_{\mid_{ X \times U}}  = {F_2}_{\mid_{ X \times U}} = F$, when $U$ is star-shaped with respect to $u^0$, when $u, u^0 \in U$ and $x^0 \in X$, note that, for all $\theta \in (0,1)$, we have $F_1(x^0, (1 - \theta) u^0 + \theta u) =  F_2(x^0, (1 - \theta) u^0 + \theta u) = F (x^0, (1 - \theta) u^0 + \theta u)$. Therefore we have $D_2 F_1(x^0,u^0)(u - u^0) ={ \frac{d}{d \theta}}_{\mid_{\theta = 0}}F_1(x^0, (1 - \theta) u^0 + \theta u) = { \frac{d}{d \theta}}_{\mid_{\theta = 0}}F_2(x^0, (1 - \theta) u^0 + \theta u)= D_2 F_2(x^0, u^0)(u-u^0)$, and so $D_2F(x^0,u^0)(u-u^0)$ does not depend of the extension of $F$.\\
Recall that $U$ is star-shaper with respect to $u^0$ means that, for all $u \in U$, the segment $[u^0,u]$ is included in $U$, \cite{Sp} (p. 93).
\end{remark}
When $V$ and $W$ are normed spaces, ${\mathfrak L}(V,W)$ denotes the space of the linear continuous functions from $V$ into $W$, and when $L \in {\mathfrak L}(V,W)$, we write $\Vert L \Vert_{\mathfrak L} := \sup \{ \Vert Lv \Vert : v \in V, \Vert v \Vert \leq 1 \}$.
\vskip1mm
\begin{theorem}\label{th37}
Let $U$ be a nonempty closed subset of $\R^d$.
Let $F : \R^n \times U \rightarrow \R^m$ be a mapping  which satisfies the following assumptions.
\begin{enumerate}
\item[(i)] $F \in C^1(\R^n \times U , \R^m)$.
\item[(ii)] There exists $u^0 \in U$ such that $F(0,u^0) = 0$ and $U$ is star-shaped with respect to $u^0$.
\item[(iii)] $\lim_{x \rightarrow 0} (\sup_{u \in B} \Vert DF(x,u) \Vert_{\mathfrak L} ) = 0$ for all nonempty bounded subset $B \subset U$.
\end{enumerate}
Then $N_F \in C^1(c_0(\N, \R^n) \times {\ell}^{\infty}(\N, U), c_0(\N,\R^m))$, and for all $\underline{x} \in c_0(\N, \R^n)$, $\underline{u} \in {\ell}^{\infty}(\N, U)$, $\underline{\delta x} \in c_0(\N, \R^n) $, $\underline{\delta u} \in {\ell}^{\infty}(\N, \R^d)$, we have 
\[
\begin{array}{ccl}
DN_F(\underline{x}, \underline{u})(\underline{\delta x},\underline{ \delta u}) & = & (DF(x_t,u_t)(\delta x_t, \delta u_t))_{t \in \N}\\
\null & = & (D_1F(x_t,u_t)\delta x_t + D_2F(x_t,u_t)\delta u_t)_{t \in \N}
\end{array}
\]
where $D_1$ and $D_2$ denote the partial Fr\'echet differentiations.
\end{theorem}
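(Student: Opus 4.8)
The plan is to fix once and for all an extension $F_1 \in C^1(\R^n \times U_1, \R^m)$ of $F$, with $U_1$ open in $\R^d$ containing $U$ (furnished by hypothesis (i)), and then to proceed in three stages: secure the well-definedness of $N_F$ into $c_0(\N,\R^m)$, exhibit and analyze the candidate differential, and finally verify the first-order Taylor estimate together with the continuity of the differential. For the first stage, the preliminary observation is that (iii) forces $DF$ to vanish at $x=0$: for a fixed $u \in U$, taking $B = \{u\}$ in (iii) gives $\lim_{x\to0}\Vert DF(x,u)\Vert_{\mathfrak L} = 0$, whence $DF(0,u) = 0$ by continuity of $DF_1$, and in particular $D_2F(0,u) = 0$ for every $u \in U$. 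Combining this with $F(0,u^0) = 0$ and the star-shapedness of $U$ with respect to $u^0$ (hypothesis (ii)), integrating $D_2F(0,\cdot)$ along the segment $[u^0,u] \subset U$ yields $F(0,u) = 0$ for all $u \in U$. Then, writing $F(x,u) = F(0,u) + \int_0^1 D_1F(sx,u)\,x\,ds$ and bounding $\Vert D_1F(sx,u)\Vert_{\mathfrak L} \le \Vert DF(sx,u)\Vert_{\mathfrak L}$, hypothesis (iii) gives $\sup_{u\in B}\Vert F(x,u)\Vert \le \Vert x\Vert\, \sup_{\Vert z\Vert\le\Vert x\Vert}\sup_{u\in B}\Vert DF(z,u)\Vert_{\mathfrak L} \to 0$ as $x\to0$, which is exactly assertion (ii) of Theorem \ref{th31} in the form of Remark \ref{rem33}. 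Together with Remark \ref{rem34}, this legitimizes $N_F$ as a map $c_0(\N,\R^n)\times\ell^\infty(\N,U)\to c_0(\N,\R^m)$, and Theorem \ref{th35} even gives its continuity.

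Next I would introduce the candidate differential $L(\underline{\delta x},\underline{\delta u}) := (D_1F(x_t,u_t)\delta x_t + D_2F(x_t,u_t)\delta u_t)_{t\in\N}$ and check that it is a bounded linear operator from $c_0(\N,\R^n)\times\ell^\infty(\N,\R^d)$ into $c_0(\N,\R^m)$. Since $\underline{x},\underline{u}$ are bounded, the set $\{(x_t,u_t):t\in\N\}$ is relatively compact in $\R^n\times U$, so $\sup_t\Vert DF(x_t,u_t)\Vert_{\mathfrak L} < +\infty$, which yields linearity and the operator-norm bound at once. Membership in $c_0$ is the crucial point and is exactly where (iii) is used: since $x_t\to0$ while $\{u_s:s\in\N\}$ is bounded, $\Vert DF(x_t,u_t)\Vert_{\mathfrak L}\le\sup_{u\in\{u_s\}}\Vert DF(x_t,u)\Vert_{\mathfrak L}\to0$, so both $(D_1F(x_t,u_t)\delta x_t)_t$ and $(D_2F(x_t,u_t)\delta u_t)_t$ tend to $0$ — the second despite $\underline{\delta u}$ being merely bounded. \emph{This is the main obstacle}: without the asymptotic decay of $DF$ at $x=0$, the $\delta u$-component of $L$ would only land in $\ell^\infty$, not in $c_0$, and the announced formula would fail to define an operator with values in $c_0$.

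I would then establish the first-order estimate from the integral form of the remainder for the extension, $R_t := F_1(x_t+\delta x_t,u_t+\delta u_t) - F(x_t,u_t) - DF(x_t,u_t)(\delta x_t,\delta u_t) = \int_0^1[DF_1(x_t+s\delta x_t,u_t+s\delta u_t) - DF_1(x_t,u_t)](\delta x_t,\delta u_t)\,ds$. For $\Vert\underline{\delta x}\Vert_\infty$ and $\Vert\underline{\delta u}\Vert_\infty$ small enough the perturbed points stay in a fixed compact neighborhood of $\overline{\{(x_t,u_t):t\in\N\}}$ contained in $\R^n\times U_1$ — this uses that the compact set $\overline{\{u_t:t\in\N\}}\subset U$ has positive distance to $\R^d\setminus U_1$ — and the uniform continuity of $DF_1$ there makes $\sup_t\Vert R_t\Vert = o(\Vert(\underline{\delta x},\underline{\delta u})\Vert)$. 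For admissible perturbations with $\underline{u}+\underline{\delta u}\in\ell^\infty(\N,U)$ the sequence $(R_t)_t$ lies in $c_0$, because all three of its constituents do (invoking assertion (ii) of Theorem \ref{th31} once more for the first term), so $L = DN_F(\underline{x},\underline{u})$ in the sense compatible with the extension-based notion of $C^1$; its independence from the chosen extension is the content of Remark \ref{rem36}. Finally, the continuity of $(\underline{x},\underline{u})\mapsto DN_F(\underline{x},\underline{u})$ reduces, via $\Vert DN_F(\underline{x},\underline{u}) - DN_F(\underline{x}',\underline{u}')\Vert_{\mathfrak L}\le\sup_t\Vert DF(x_t,u_t) - DF(x_t',u_t')\Vert_{\mathfrak L}$, to the continuity of the Nemytskii operator attached to $DF$ on $\ell^\infty$-spaces, which follows from the uniform continuity of $DF$ on compact sets exactly as in the proof of Theorem \ref{th35} (via Theorem A1.1 of \cite{BCr}). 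Assembling the three stages yields $N_F\in C^1$ with the announced formula.
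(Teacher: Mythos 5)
Your proof is correct, and it reaches the conclusion by a genuinely different route than the paper. The first stage agrees in substance: you verify the hypothesis of Theorem \ref{th35} (the paper's (\ref{eq31})), though you do it by first deducing $DF(0,u)=0$ for all $u\in U$ from (iii), hence $F(0,\cdot)\equiv 0$ on $U$ by integration along the segments $[u^0,u]$, and then applying the mean value theorem in the $x$-variable; the paper instead applies the mean value theorem in the $u$-variable starting from $u^0$ and uses the continuity of $F$ at $(0,u^0)$ together with (ii) and (iii) — both variants are valid. The real divergence is in the differentiability itself: the paper disposes of it in two lines by citing Theorem A1.2 of \cite{BCr} for the Nemytskii operator $N^1_F$ between $\ell^\infty$-spaces and then restricting to $c_0(\N,\R^n)\times\ell^\infty(\N,U)$, whereas you prove it from scratch (integral form of the remainder for an extension $F_1$, positive distance of the compact set $\overline{\{u_t:t\in\N\}}$ to $\R^d\setminus U_1$, uniform continuity of $DF_1$ on a compact neighborhood, and a separate Theorem \ref{th35}-type argument for the continuity of $(\underline{x},\underline{u})\mapsto DN_F(\underline{x},\underline{u})$). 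What your route buys is precisely the point the paper's restriction argument leaves silent: that the candidate differential, applied to a merely bounded $\underline{\delta u}\in\ell^\infty(\N,\R^d)$, takes values in $c_0(\N,\R^m)$ — the restriction of the $\ell^\infty$-differential a priori only lands in $\ell^\infty(\N,\R^m)$, and, as you correctly isolate, it is exactly hypothesis (iii) that forces the $\delta u$-component into $c_0$. What the paper's route buys is brevity and reuse of an existing result. One caveat, shared with the paper: what is actually established (by both arguments) is differentiability along the set $c_0(\N,\R^n)\times\ell^\infty(\N,U)$ with a continuous differential, rather than the literal existence of a $C^1$ extension to an open neighborhood of $\ell^\infty(\N,U)$ with values in $c_0(\N,\R^m)$, which is the paper's formal definition of $C^1$ on a non-open set; since the paper's own proof does no more, this is a definitional subtlety and not a gap in your argument.
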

\begin{proof}
Let $B \subset U$ be nonempty and bounded. Let $R \in (0, + \infty)$ such that $\Vert u \Vert \leq R$ when $u \in B \cup \{ u^0 \}$. Using the mean value theorem, we have, for all $x \in \R^n$ and for all $u \in B$,
\[
\begin{array}{rcl}
\Vert F(x,u) \Vert & \leq & \Vert F(x,u^0) \Vert + \sup_{v \in [u^0,u]} \Vert D_2 F(x,v) \Vert \cdot \Vert v \Vert\\
\null & \leq & \Vert F(x,u^0) \Vert + \sup_{ v \in B_R \cap U} \Vert D_2F (x,v) \Vert \cdot \Vert v \Vert\\
\null & \leq & \Vert F(x,u^0) \Vert + \sup_{ v \in B_R \cap U} \Vert D F(x,v) \Vert \cdot R
\end{array}
\]
which implies
$$\sup_{u \in B}\Vert F(x,u) \Vert \leq \Vert F(x,u^0) \Vert + \sup_{ v \in B_R \cap U} \Vert D F(x,v) \Vert \cdot R,$$
and therefore, using assumptions (iii) and  (ii) and the continuity of $F$, we obtain
\begin{equation}\label{eq31}
\lim_{ x \rightarrow 0} (\sup_{u \in B}\Vert F(x,u) \Vert ) = 0 \; {\rm when} \; B \neq \emptyset, B \subset \R^d \; {\rm is} \; {\rm bounded}.
\end{equation}
Since $F$ is continuously Fr\'echet differentiable, $F$ is continuous, and then, with (\ref{eq31}), we can apply Theorem \ref{th35} to $F$ and assert that $N_F$ is well defined from $c_0(\N, \R^n) \times {\ell}^{\infty}(\N, U)$ into $c_0(\N,\R^m)$ and it is continuous, i.e.
\begin{equation}\label{eq32} 
N_F \in C^0(c_0(\N, \R^n) \times {\ell}^{\infty}(\N, U), c_0(\N,\R^m)).
\end{equation}
Using Theorem A1.2 of \cite{BCr} (p. 24) to the operator $N^1_F$ defined in the proof of the previous theorem, we can assert that $N^1_F$ is continuously Fr\'echet differentiable from ${\ell}^{\infty}(\N, \R^n) \times {\ell}^{\infty}(\N, U)$ into ${\ell}^{\infty}(\N, \R^m)$ and, for all $\underline{x} \in {\ell}^{\infty}(\N, \R^n)$, $\underline{u} \in {\ell}^{\infty}(\N, U)$, $ \underline{ \delta x} \in {\ell}^{\infty}(\N, \R^n) $, $\underline{ \delta u} \in {\ell}^{\infty}(\N, \R^d)$, we have $
DF(\underline{x}, \underline{u})( \underline{ \delta x},  \underline{ \delta u})  =  (DF(x_t,u_t)(\delta x_t, \delta u_t))_{t \in \N}$. Since $N_F$ is a restriction to a vector subspace of $N^1_F$, we obtain that $N_F$ is continuously Fr\'echet differentiable from $c_0(\N, \R^n) \times {\ell}^{\infty}(\N, U)$ into $c_0(\N,\R^m)$ 
and the formula of its differential is identical to this one of $N^1_F$.
\end{proof}
The following result is useful to translate the properties of the dynamical system which governs (P1) into the language of operators between sequence spaces.
\begin{corollary}\label{cor38}
Let $U$ be a nonempty closed subset of $\R^d$.
Let $f : \R^n \times U \rightarrow \R^n$ be a mapping which satisfies the assumptions (i, ii, iii) of Theorem \ref{th37}.
We consider the operator ${\mathcal T}(\underline{x}, \underline{u}) := (x_{t+1} -f(x_t,u_t))_{t \in \N}$. \\
 Then ${\mathcal T} \in C^1(c_0(\N, \R^n) \times {\ell}^{\infty}(\N, U), c_0(\N, \R^n))$ and for all $\underline{x} \in c_0(\N, \R^n)$, $\underline{u} \in {\ell}^{\infty}(\N, U)$, $\underline{\delta x} \in c_0(\N, \R^n) $, $\underline{\delta u} \in {\ell}^{\infty}(\N, \R^d)$, we have 
\[
\begin{array}{rcl}
D{\mathcal T}(\underline{x}, \underline{u})(\underline{\delta x}, \underline{\delta u}) & = & (\delta x_{t+1} -Df(x_t,u_t)(\delta x_t, \delta u_t) )_{t \in \N}\\
\null & = & (\delta x_{t+1} - D_1f(x_t,u_t)\delta x_t - D_2f(x_t,u_t) \delta u_t )_{t \in \N}.
\end{array}
\]
\end{corollary}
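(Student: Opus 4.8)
The plan is to recognize $\mathcal T$ as the difference of two maps whose regularity is already available: a continuous linear shift operator, and the Nemytskii operator $N_f$ governed by Theorem \ref{th37}. Concretely, writing $L(\underline{x}, \underline{u}) := (x_{t+1})_{t \in \N}$, we have the decomposition $\mathcal T = L - N_f$, and the whole statement should follow from the fact that the class $C^1$ is stable under differences, with the differential being the difference of the differentials.

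First I would treat the shift part $L$. If $\underline{x} \in c_0(\N, \R^n)$, then $\lim_{t \to +\infty} x_t = 0$ forces $\lim_{t \to +\infty} x_{t+1} = 0$, so $L(\underline{x}, \underline{u}) \in c_0(\N, \R^n)$ and $L$ is well defined. Extending it to the linear space $c_0(\N, \R^n) \times {\ell}^{\infty}(\N, \R^d)$, it is genuinely linear in $(\underline{x}, \underline{u})$ and satisfies $\Vert L(\underline{x}, \underline{u}) \Vert_{\infty} = \sup_{t} \Vert x_{t+1} \Vert \leq \sup_{t} \Vert x_t \Vert = \Vert \underline{x} \Vert_{\infty}$, hence it is continuous with operator norm at most $1$. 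A continuous linear map is of class $C^1$ and coincides with its own differential at every point, so $DL(\underline{x}, \underline{u})(\underline{\delta x}, \underline{\delta u}) = (\delta x_{t+1})_{t \in \N}$ for all admissible directions.

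Next I would invoke Theorem \ref{th37}. The assumptions (i), (ii), (iii) imposed on $f$ in the corollary are exactly hypotheses (i), (ii), (iii) of that theorem (with $m = n$), so Theorem \ref{th37} applies verbatim and yields $N_f \in C^1(c_0(\N, \R^n) \times {\ell}^{\infty}(\N, U), c_0(\N, \R^n))$ together with the formula $DN_f(\underline{x}, \underline{u})(\underline{\delta x}, \underline{\delta u}) = (D_1 f(x_t,u_t)\delta x_t + D_2 f(x_t,u_t)\delta u_t)_{t \in \N}$. Combining the two pieces, $\mathcal T = L - N_f$ is of class $C^1$ as a difference of two $C^1$ maps, and $D\mathcal T(\underline{x}, \underline{u}) = DL(\underline{x}, \underline{u}) - DN_f(\underline{x}, \underline{u})$ gives precisely the stated differential.

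I do not expect a genuine obstacle here: the corollary is essentially a formal consequence of Theorem \ref{th37}. The only point requiring any verification is the well-definedness and continuity of the shift $L$ on $c_0(\N, \R^n)$, which is routine, the sole mild care being to treat $\mathcal T$ on the linear ambient space $c_0(\N, \R^n) \times {\ell}^{\infty}(\N, \R^d)$ so that the $C^1$ notion fixed just before Remark \ref{rem36} (extension to an open neighborhood of $U$) applies consistently to both summands.
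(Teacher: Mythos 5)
Your proof is correct and follows essentially the same route as the paper: the authors also write ${\mathcal T} = \Lambda - N_f$ with $\Lambda(\underline{x},\underline{u}) := (x_{t+1})_{t\in\N}$, verify that $\Lambda$ is linear and continuous via the same norm estimate (hence $C^1$ with $D\Lambda = \Lambda$), and invoke Theorem \ref{th37} for the $N_f$ part. Your extra remark about working on the ambient space so that the $C^1$-on-$X\times U$ convention applies to both summands is a sensible precision that the paper leaves implicit.
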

\begin{proof}
Since $f$ satisfies the assumptions of Theorem \ref{th37}, we have $N_f \in C^1(c_0(\N, \R^n) \times {\ell}^{\infty}(\N, U), c_0(\N, \R^n))$. We set $\Lambda (\underline{x}, \underline{u}) := (x_{t+1})_{t \in \N}$ when $\underline{x} \in c_0(\N, \R^n)$ and $\underline{u} \in {\ell}^{\infty}(\N, U)$. Then $\Lambda$ is well defined and is linear. Since 
$\Vert \Lambda (\underline{x}, \underline{u}) \Vert_{\infty} \leq   \Vert \underline{x} \Vert_{\infty} \leq  \Vert \underline{x} \Vert_{\infty} +  \Vert \underline{u} \Vert_{\infty}$, $\Lambda$ is continuous, and consequently it is of class $C^1$. Moreover we have for all $\underline{x} \in c_0(\N, \R^n)$, $\underline{u} \in {\ell}^{\infty}(\N, U)$, $\underline{\delta x} \in c_0(\N, \R^n) $, $\delta\underline{u} \in {\ell}^{\infty}(\N, \R^d)$, $D \Lambda (\underline{x}, \underline{u}) (\underline{\delta x},  \underline{\delta u}) = \Lambda (\underline{\delta x}, \underline{\delta u}) = ( \delta x_{t+1})_{t \in \N}$. Note that ${\mathcal T} =  \Lambda - N_f $ which implies that ${\mathcal T}$ is continuously Fr\'echet differentiable, and using Theorem \ref{th37} we obtain
\[
\begin{array}{rcl}
D {\mathcal T} (\underline{x}, \underline{u}) (\underline{\delta x}, \underline{\delta u}) &=& D \Lambda (\underline{x}, \underline{u}) (\underline{\delta x}, \underline{\delta u}) -D N_f (\underline{x}, \underline{u}) (\underline{\delta x}, \underline{\delta u})   \\
\null &=& ( \delta x_{t+1} - Df(x_t,u_t)(\delta x_t, \delta u_t) )_{t \in \N}.
\end{array}
\]
\end{proof}
\begin{remark}\label{rem39} We consider the operator ${\mathcal F} : c_0(\N_*, \R^n) \rightarrow c_0(\N, \R^n)$ defined by ${\mathcal F} ( \underline{x'}) := \underline{x}$ where $x_0 := 0$ and $x_t := x'_t$ when $t \in \N_*$. We introduce the sequence $\underline{\sigma} \in c_0(\N, \R^n)$ by setting $\sigma_0 := \sigma$ and $\sigma_t := 0$ when $t \in \N_*$. We consider the operator ${\mathcal E} : c_0(\N_*, \R^n) \rightarrow c_0(\N, \R^n)$ defined by ${\mathcal E} (\underline{x'}) := {\mathcal F} (\underline{x'}) + \underline{\sigma}$. Then ${\mathcal F}$ is linear continuous, ${\mathcal E}$ is affine continuous, and consequently these operators are continuously Fr\'echet differentiable, and for all $\underline{x'} \in  c_0(\N_*, \R^n)$ and $ \underline{\delta x'} \in  c_0(\N_*, \R^n)$, we have $D {\mathcal E} (\underline{x'}) \underline{\delta x'} = D {\mathcal F} (\underline{x'}) \underline{\delta x'} = {\mathcal F} ( \underline{\delta x'} )= (0, \delta x'_1, \delta x'_2, \cdot \cdot \cdot)$.
\end{remark}
\begin{proposition}\label{prop310}
Let $U$ be a nonempty closed subset of $\R^d$.
let $f : \R^n \times U \rightarrow \R^n$ be a mapping which satisfies the following properties.
\begin{itemize}
\item[({\bf a})] $f \in C^0( \R^n \times U, \R^n)$.
\item[({\bf b})] $f(0,u) = 0$  for all $u \in U$.
\item[({\bf c})] For all $u \in U$, for all $x \in \R^n$, $D_1f(x,u)$ exists and \\
$D_1f( \cdot , u) \in C^0( \R^n, {\mathfrak L}(\R^n, \R^n))$.
\item[({\bf d})] $D_1f$ transforms the nonempty bounded subsets of $\R^n \times U$ into bounded subsets of ${\mathfrak L}(\R^n, \R^n)$.
\item[({\bf e})] $\lim_{x \rightarrow 0}( \sup_{u \in B} \Vert D_1 f(x,u) \Vert_{\mathfrak L} ) = 0$ for all nonempty bounded subset $B \subset U$.
\end{itemize}
Then the operator  ${\mathcal T}(\underline{x}, \underline{u}) := (x_{t+1} -f(x_t,u_t))_{t \in \N}$ satisfies the following properties.
\begin{itemize}
\item[($\alpha$)] ${\mathcal T} \in C^0(c_0(\N, \R^n) \times {\ell}^{\infty}(\N, U), c_0(\N, \R^n))$
\item[($\beta$)] For all $( \underline{x}, \underline{u}) \in c_0(\N, \R^n) \times {\ell}^{\infty}(\N, U)$, $D_1 {\mathcal T}( \underline{x}, \underline{u})$ exists and for all 
$\underline{u} \in {\ell}^{\infty}(\N, U)$, $D_1 {\mathcal T}( \cdot , \underline{u}) \in C^0(c_0(\N, \R^n) , {\mathfrak L}(c_0(\N, \R^n), c_0(\N, \R^n)))$.
\end{itemize}
\end{proposition}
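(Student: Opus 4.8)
The plan is to write ${\mathcal T} = \Lambda - N_f$, where $\Lambda(\underline{x}, \underline{u}) := (x_{t+1})_{t \in \N}$ is the linear continuous shift from Corollary \ref{cor38}, and to reduce everything to statements about $N_f$. Indeed $\Lambda$ is continuous and, being linear in $\underline{x}$ and independent of $\underline{u}$, its partial differential $D_1\Lambda$ is the constant map $\underline{\delta x} \mapsto (\delta x_{t+1})_{t \in \N}$; so $(\alpha)$ will follow from the continuity of $N_f$, and $(\beta)$ from the existence and $\underline{x}$-continuity of $D_1 N_f$, via $D_1{\mathcal T} = D_1\Lambda - D_1 N_f$.

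For $(\alpha)$ I would first establish the limit condition (\ref{eq31}) required by Theorem \ref{th35}. I cannot argue as in Theorem \ref{th37}, since I have no differentiability of $f$ in the $u$ variable; instead I would apply the mean value inequality in the first variable only. Fixing $R$, writing $B = B_R \cap U$, and using $f(0,u) = 0$ from $(\mathbf{b})$ together with the fundamental theorem of calculus along the segment $[0,x]$ (legitimate by $(\mathbf{c})$), I obtain $\Vert f(x,u)\Vert \le \Vert x\Vert \cdot \sup_{s \in [0,1]} \Vert D_1 f(sx,u)\Vert_{\mathfrak L}$ for $u \in B$. By $(\mathbf{d})$ the supremum over $u \in B$ and $s \in [0,1]$ stays bounded for $\Vert x \Vert \le 1$, so $\sup_{u \in B} \Vert f(x,u)\Vert \le M \Vert x\Vert \to 0$. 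Combined with $(\mathbf{a})$ this lets me invoke Theorem \ref{th35} to conclude that $N_f$, hence ${\mathcal T}$, is continuous.

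For $(\beta)$ the candidate partial derivative is the operator $L(\underline{x},\underline{u}) : \underline{\delta x} \mapsto (D_1 f(x_t,u_t)\delta x_t)_{t \in \N}$. Setting $B := \{u_t : t \in \N\}$, I would first check it is well defined into $c_0$ and bounded: since $\underline{x} \in c_0$ we have $x_t \to 0$, so $(\mathbf{e})$ gives $\Vert D_1 f(x_t, u_t)\Vert_{\mathfrak L} \le \sup_{u \in B} \Vert D_1 f(x_t, u)\Vert_{\mathfrak L} \to 0$, whence $(D_1 f(x_t,u_t)\delta x_t)_t \in c_0$, while $(\mathbf{d})$ gives $\sup_t \Vert D_1 f(x_t, u_t)\Vert_{\mathfrak L} < \infty$, bounding $\Vert L(\underline{x},\underline{u})\Vert_{\mathfrak L}$. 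To prove $L(\underline{x},\underline{u}) = D_1 N_f(\underline{x},\underline{u})$, I would use the integral remainder $f(x_t+\delta x_t, u_t) - f(x_t, u_t) - D_1 f(x_t, u_t)\delta x_t = \int_0^1 (D_1 f(x_t+s\delta x_t, u_t) - D_1 f(x_t, u_t))\delta x_t\, ds$, whose norm is at most $\Vert \delta x_t\Vert \cdot \sup_{s \in [0,1]} \Vert D_1 f(x_t + s\delta x_t, u_t) - D_1 f(x_t, u_t)\Vert_{\mathfrak L}$. Everything then reduces to the uniform estimate $\sup_t \sup_{s \in [0,1]} \Vert D_1 f(x_t + s\delta x_t, u_t) - D_1 f(x_t, u_t)\Vert_{\mathfrak L} \to 0$ as $\Vert \underline{\delta x}\Vert_\infty \to 0$.

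This uniform estimate is the main obstacle, and I expect to prove it by splitting the index set. For the tail, $(\mathbf{e})$ yields $\rho$ with $\sup_{u \in B} \Vert D_1 f(x, u)\Vert_{\mathfrak L} < \e/2$ once $\Vert x\Vert < \rho$, and since $x_t \to 0$ (this is exactly where the $c_0$ hypothesis is used) there is $T$ with $\Vert x_t\Vert < \rho/2$ for $t > T$; then $\Vert \underline{\delta x}\Vert_\infty < \rho/2$ forces both arguments into $B_\rho$, so the difference is below $\e$. For the head, the finitely many maps $D_1 f(\cdot, u_t)$, $t \le T$, are continuous at $x_t$ by $(\mathbf{c})$, so a common small $\Vert \underline{\delta x}\Vert_\infty$ handles them uniformly in $s \in [0,1]$. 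The continuity of $\underline{x} \mapsto D_1 N_f(\underline{x},\underline{u})$ for fixed $\underline{u}$ then follows from the identity $\Vert L(\underline{x}',\underline{u}) - L(\underline{x},\underline{u})\Vert_{\mathfrak L} = \sup_t \Vert D_1 f(x'_t, u_t) - D_1 f(x_t, u_t)\Vert_{\mathfrak L}$ and the very same head/tail mechanism applied to a sequence $\underline{x}^{(k)} \to \underline{x}$ in $c_0$. Assembling these facts gives the existence and $\underline{x}$-continuity of $D_1 N_f$, and with $D_1 {\mathcal T} = D_1\Lambda - D_1 N_f$ this yields $(\beta)$.
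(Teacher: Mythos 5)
Your proof is correct, and it splits against the paper's proof as follows. For part ($\alpha$) you take essentially the paper's route: the mean value inequality in the first variable together with ({\bf b}), ({\bf c}) gives $\Vert f(x,u)\Vert \leq \sup_{z \in [0,x]} \Vert D_1 f(z,u)\Vert_{\mathfrak L}\cdot \Vert x\Vert$, assumption ({\bf d}) makes that supremum uniformly bounded over bounded sets of $(x,u)$, and Theorem \ref{th35} (with ({\bf a})) then yields the continuity of $N_f$, hence of ${\mathcal T} = \Lambda - N_f$. The genuine divergence is in part ($\beta$): the paper disposes of it in one line by fixing $\underline{u}$ and citing Theorem A1.2 of \cite{BCr}, the $C^1$ result for Nemytskii operators between ${\ell}^{\infty}$ spaces, whereas you construct the candidate operator $L(\underline{x},\underline{u})\underline{\delta x} = (D_1 f(x_t,u_t)\delta x_t)_{t \in \N}$ and verify everything by hand: that $L$ is a bounded operator of $c_0(\N,\R^n)$ into itself (({\bf d}) for boundedness, ({\bf e}) plus $x_t \to 0$ for the $c_0$-valued character), that it is the partial Fr\'echet derivative (integral remainder, then a head/tail estimate in which the tail is controlled by ({\bf e}) and the head by the continuity in ({\bf c}) at finitely many indices), and that $\underline{x} \mapsto L(\underline{x},\underline{u})$ is continuous into ${\mathfrak L}(c_0,c_0)$ by the same splitting. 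Your version is longer but self-contained, and it makes explicit exactly where each hypothesis and the $c_0$ structure enter; the paper's citation buys brevity at the price of a black box stated for ${\ell}^{\infty}$ rather than $c_0$, so that strictly speaking one must still observe that the derivative of the restriction maps $c_0$ into $c_0$ --- a point your argument establishes directly.
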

\begin{proof}
Let $B$ be a nonempty bounded subset of $U$. We fix $R \in (0, + \infty)$. For all $x \in \R^n$ such that $\Vert x  \Vert \leq R$, using (b), (c) and the mean value theorem we obtain 
$$\Vert f(x,u) \Vert \leq \Vert f(0,u) \Vert + \sup_{z \in [0,x]}  \Vert D_1f(z,u) \Vert_{\mathfrak L} \cdot \Vert x \Vert \leq   \sup_{z \in [0,x]} \Vert D_1f(z,u) \Vert_{\mathfrak L} \cdot \Vert x \Vert \Longrightarrow $$
$$\sup_{u \in B} \Vert f(x,u) \Vert \leq  \sup_{ \Vert z \Vert \leq R} \sup_{u \in B}  \Vert D_1f(z,u) \Vert_{\mathfrak L} \cdot \Vert x \Vert$$
which implies, using (d), the following property.
\begin{equation}\label{eq33}
\lim_{ x \rightarrow 0} (\sup_{u \in B} \Vert f(x,u) \Vert ) = 0.
\end{equation}
Therefore, from (a) and (\ref{eq33}) we obtain the conclusion ($\alpha$).

When we fix $\underline{u} \in {\ell}^{\infty}(\N, U)$, using Theorem A1.2 in \cite{BCr}, we obtain the conclusion ($\beta$).
\end{proof}
\vskip1mm
After the operators, we consider the criterion of Problem (P1).
\vskip2mm
\begin{proposition}\label{prop311}
Let $U$ be a nonempty closed subset of $\R^d$.
Let $\phi \in C^1(\R^n \times U, \R)$ and $\beta \in (0,1)$. We consider $J(\underline{x}, \underline{u}) := \sum_{ t=0}^{+ \infty} \beta^t \phi(x_t,u_t)$ when $\underline{x} \in c_0(\N, \R^n)$ and $\underline{u} \in {\ell}^{\infty}(\N, U)$. Then $J \in C^1(c_0(\N, \R^n) \times {\ell}^{\infty}(\N, U), \R)$ and for all $\underline{x} \in c_0(\N, \R^n)$, $\underline{u} \in {\ell}^{\infty}(\N, U)$, $ \underline{\delta x} \in c_0(\N, \R^n) $, $ \underline{\delta u} \in {\ell}^{\infty}(\N, \R^d)$, we have 
$$DJ(\underline{x}, \underline{u})( \underline{\delta x},  \underline{\delta u}) = \sum_{t = 0}^{\infty} \beta^t ( D_1 \phi(x_t,u_t) \delta x_t + D_2 \phi(x_t,u_t) \delta u_t).$$
\end{proposition}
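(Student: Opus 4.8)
The plan is to factor the functional $J$ through the $\ell^\infty$-valued Nemytskii operator and a continuous linear functional, so that the $C^1$ regularity follows from Theorem A1.2 of \cite{BCr} together with the chain rule.

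First I would establish that $J$ is well defined and identify the factorization. Let $(\underline{x}, \underline{u}) \in c_0(\N, \R^n) \times {\ell}^{\infty}(\N, U)$. Since $\underline{x}$ is bounded (being convergent) and $\underline{u}$ is bounded, the set $\{ (x_t, u_t) : t \in \N \}$ is a bounded subset of $\R^n \times U$; as $\R^d$ is finite-dimensional and $U$ is closed, this set is relatively compact, so the continuous function $\phi$ is bounded on it. Hence $(\phi(x_t, u_t))_{t \in \N} \in {\ell}^{\infty}(\N, \R)$, and because $\beta \in (0,1)$ the series $\sum_{t=0}^{+\infty} \beta^t \phi(x_t, u_t)$ converges absolutely, which shows that $J$ is well defined. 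I then introduce the linear functional $S : {\ell}^{\infty}(\N, \R) \to \R$, $S(\underline{a}) := \sum_{t=0}^{+\infty} \beta^t a_t$; the bound $\vert S(\underline{a}) \vert \leq \Vert \underline{a} \Vert_{\infty} / (1 - \beta)$ shows that $S \in {\mathfrak L}({\ell}^{\infty}(\N, \R), \R)$. With the operator $N^1_\phi : {\ell}^{\infty}(\N, \R^n) \times {\ell}^{\infty}(\N, U) \to {\ell}^{\infty}(\N, \R)$ defined as in the proof of Theorem \ref{th35}, one then has $J = (S \circ N^1_\phi)_{\mid_{c_0(\N, \R^n) \times {\ell}^{\infty}(\N, U)}}$.

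Next I would invoke the regularity results. Since $\phi \in C^1(\R^n \times U, \R)$ and bounded subsets of $\R^n \times U$ are relatively compact, Theorem A1.2 of \cite{BCr} (used exactly as in the proof of Theorem \ref{th37}) gives that $N^1_\phi$ is continuously Fr\'echet differentiable, with $DN^1_\phi(\underline{x}, \underline{u})(\underline{\delta x}, \underline{\delta u}) = (D_1\phi(x_t, u_t)\delta x_t + D_2\phi(x_t, u_t)\delta u_t)_{t \in \N}$. Being continuous and linear, $S$ is of class $C^1$ with $DS(\underline{a}) = S$ for every $\underline{a}$. The chain rule then makes $S \circ N^1_\phi$ of class $C^1$ on ${\ell}^{\infty}(\N, \R^n) \times {\ell}^{\infty}(\N, U)$; since $c_0(\N, \R^n)$ is a closed vector subspace of ${\ell}^{\infty}(\N, \R^n)$, the restriction $J$ is of class $C^1$ on $c_0(\N, \R^n) \times {\ell}^{\infty}(\N, U)$. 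The differential formula is then just the chain rule applied to this factorization: for admissible increments,
$$DJ(\underline{x}, \underline{u})(\underline{\delta x}, \underline{\delta u}) = S\bigl(DN^1_\phi(\underline{x}, \underline{u})(\underline{\delta x}, \underline{\delta u})\bigr) = \sum_{t=0}^{+\infty} \beta^t (D_1\phi(x_t, u_t)\delta x_t + D_2\phi(x_t, u_t)\delta u_t),$$
which is the claimed expression.

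The only genuinely delicate point is the $C^1$ regularity of the $\ell^\infty$-Nemytskii operator $N^1_\phi$ over the non-open constraint set $U$; this is precisely what Theorem A1.2 of \cite{BCr} supplies (through a $C^1$ extension of $\phi$ to $\R^n \times U_1$ with $U_1 \supset U$ open, together with the relative compactness of bounded subsets of $\R^d$). Everything else — the continuity and linearity of $S$, the absolute convergence coming from $\beta \in (0,1)$, and the passage to the closed subspace $c_0(\N, \R^n)$ — is routine, and in particular no vanishing condition $\phi(0,u)=0$ is needed here because we route through the ${\ell}^{\infty}$-valued operator rather than the $c_0$-valued one of Theorem \ref{th37}.
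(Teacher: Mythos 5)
Your proof is correct and follows essentially the same route as the paper: factor $J$ through the $\ell^\infty$-valued Nemytskii operator $N^1_\phi$ (whose $C^1$ regularity comes from Theorem A1.2 of \cite{BCr}) and the continuous linear summation functional, then apply the chain rule and restrict to $c_0(\N,\R^n) \times \ell^\infty(\N,U)$. The only cosmetic differences are that you prove continuity of the summation functional by the direct estimate $\vert S(\underline{a})\vert \leq \Vert \underline{a}\Vert_\infty/(1-\beta)$ where the paper cites the $\ell^1$--$\ell^\infty$ duality (Theorem 15.22 of \cite{AB}), and that you restrict the composition at the end rather than restricting $N^1_\phi$ before composing, which changes nothing.
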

\begin{proof}
We consider the Nemytskii operator 
$$N^1_{\phi} : {\ell}^{\infty}(\N, \R^n) \times {\ell}^{\infty} (\N, U) \rightarrow {\ell}^{\infty}(\N, \R), N^1_{\phi}(\underline{x}, \underline{u}) = (\phi(x_t,u_t))_{t \in \N}.$$
Using Theorem A1.2 in \cite{BCr}, we know that $N^1_{\phi} \in C^1({\ell}^{\infty}(\N, \R^n) \times {\ell}^{\infty} (\N, U), {\ell}^{\infty}(\N, \R))$ and for all $\underline{x} \in {\ell}^{\infty}(\N, \R^n)$, $\underline{u} \in {\ell}^{\infty}(\N, U)$, $ \underline{\delta x} \in {\ell}^{\infty}(\N, \R^n) $, $ \underline{\delta u} \in {\ell}^{\infty}(\N, \R^d)$, we have 
$$D N^1_{\phi}(\underline{x}, \underline{u})( \underline{\delta x},  \underline{\delta u}) = (D_1 \phi (x_t,u_t) \delta x_t + D_2 \phi (x_t,u_t) \delta u_t)_{ t \in \N}.$$
We also consider the other Nemytskii operator 
$$N_{\phi} : c_0(\N, \R^n) \times {\ell}^{\infty} (\N, U) \rightarrow {\ell}^{\infty}(\N, \R), N_{\phi}(\underline{x}, \underline{u}) = (\phi(x_t,u_t))_{t \in \N}.$$
Since $N_{\phi}$ is a restriction of $N^1_{\phi}$ we have $N_{\phi} \in C^1( c_0(\N, \R^n) \times {\ell}^{\infty} (\N, U),  {\ell}^{\infty}(\N, \R))$ 
and for all $\underline{x} \in c_0(\N, \R^n)$, $\underline{u} \in {\ell}^{\infty}(\N, U)$, $ \underline{\delta x} \in c_0(\N, \R^n) $, $ \underline{\delta u} \in {\ell}^{\infty}(\N, \R^d)$, we have 
$$D N_{\phi}(\underline{x}, \underline{u})( \underline{\delta x},  \underline{\delta u}) = (D_1 \phi (x_t,u_t) \delta x_t + D_2 \phi (x_t,u_t) \delta u_t)_{ t \in \N}.$$
Since $\beta \in (0,1)$, $(\beta^t)_{t \in \N} \in {\ell}^1(\N, \R)$ (the space of the absolutely convergent real series). We define the linear functional 
$$L(\underline{z}) := \sum_{t = 0}^{+ \infty} \beta^t z_t = \langle (\beta^t)_{t \in \N}, \underline{z} \rangle_{{\ell}^1, {\ell}^{\infty}}$$
where $\underline{z} \in {\ell}^{\infty}(\N, \R)$ and  $\langle \cdot , \cdot   \rangle_{{\ell}^1, {\ell}^{\infty}}$ denotes the duality bracket between ${\ell}^1(\N, \R)$ and ${\ell}^{\infty}(\N, \R)$. Using \cite{AB} (Theorem 15.22, p. 503), we know that $L$ is linear continuous on ${\ell}^{\infty}(\N, \R)$, and consequently we have $L \in C^1({\ell}^{\infty}(\N, \R), \R)$, and for all $\underline{z}$ and $  \underline{\delta z}$ in ${\ell}^{\infty}(\N, \R)$, we have $D L(\underline{z}) \underline{ \delta z} = L (\underline{\delta  z}) = \sum_{t = 0}^{+ \infty} (\beta^t  \cdot \delta z_t)$. Since $J = L \circ N_{\phi}$, $J$ is continuously differentiable as a composition of continuously differentiable mappings, and using the chain rule of the differential calculus, for all $\underline{x} \in c_0(\N, \R^n)$, $\underline{u} \in {\ell}^{\infty}(\N, U)$, $ \underline{\delta x} \in c_0(\N, \R^n) $, $\underline{\delta  u} \in {\ell}^{\infty}(\N, \R^d)$, we obtain 
\[
\begin{array}{rcl}
D J(\underline{x}, \underline{u})( \underline{\delta x},  \underline{\delta u}) & = & D L(N_{\phi}(\underline{x}, \underline{u})) D (N_{\phi}(\underline{x}, \underline{u})( \underline{\delta x},  \underline{\delta u})\\
\null & = & L(D (N_{\phi}(\underline{x}, \underline{u})( \underline{\delta x},  \underline{\delta u})\\
\null & = & \sum_{t=0}^{+ \infty} \beta^t (D_1 \phi (x_t,u_t) \delta x_t + D_2 \phi (x_t,u_t) \delta u_t).
\end{array}
\]
\end{proof}
Using similar arguments we establish the following result.
\vskip1mm
\begin{proposition}\label{prop312}
Let $U$ be a nonempty subset of $\R^d$,  $\beta \in (0,1)$ and $\phi \in C^0(\R^n \times U, \R)$ such that $D_1 \phi(x,u)$ exists for all $(x,u) \in \R^n \times U$ and, for all $u \in U$, $D_1 \phi( \cdot, u) \in C^0(\R^n, {\mathfrak L}(\R^n, \R^n))$.\\
Then $J \in C^0(c_0(\N, \R^n) \times {\ell}^{\infty}(\N, U), \R)$, and for all $\underline{x} \in c_0(\N, \R^n)$, for all $\underline{u} \in  {\ell}^{\infty}(\N, U)$, $D_1J(\underline{x}, \underline{u})$ exists and $D_1J(\cdot , \underline{u}) \in C^0(c_0(\N, \R^n), {\mathfrak L}(c_0(\N, \R^n), \R))$. Moreover, for all $\underline{x} \in c_0(\N, \R^n)$, for all $\underline{u} \in  {\ell}^{\infty}(\N, U)$, for all $\underline{\delta x} \in c_0(\N, \R^n)$, we have
$$D_1J(\underline{x}, \underline{u}) \underline{\delta x} = \sum_{t = 0}^{+ \infty} \beta^t D_1 \phi(x_t, u_t) \delta x_t.$$
\end{proposition}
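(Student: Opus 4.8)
The plan is to follow the proof of Proposition~\ref{prop311} almost verbatim, but to downgrade every $C^1$ conclusion to the pair ``continuity $+$ existence and continuity of the partial differential with respect to $\underline{x}$'', exactly as Proposition~\ref{prop310} downgrades the conclusions of Corollary~\ref{cor38}. The factorization $J = L \circ N_\phi$ will again be the backbone: $N_\phi$ carries the regularity coming from $\phi$, while $L$ is a fixed continuous linear functional that is harmless under differentiation.

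First I would introduce the Nemytskii operator $N^1_\phi : \ell^\infty(\N,\R^n)\times \ell^\infty(\N,U) \to \ell^\infty(\N,\R)$, $N^1_\phi(\underline{x},\underline{u}) := (\phi(x_t,u_t))_{t\in\N}$, and invoke Theorem~A1.2 of \cite{BCr} in its partial form, the very form already used for conclusion $(\beta)$ of Proposition~\ref{prop310}. Under the hypotheses that $\phi$ is continuous and that, for each fixed $u$, $D_1\phi(\cdot,u)$ exists and is continuous, this yields: $N^1_\phi$ is continuous; for every $(\underline{x},\underline{u})$ the partial differential $D_1 N^1_\phi(\underline{x},\underline{u})$ exists; for each fixed $\underline{u}$ the map $\underline{x}\mapsto D_1 N^1_\phi(\underline{x},\underline{u})$ is continuous; and $D_1 N^1_\phi(\underline{x},\underline{u})\underline{\delta x} = (D_1\phi(x_t,u_t)\delta x_t)_{t\in\N}$. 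Restricting to the closed subspace $c_0(\N,\R^n)\times \ell^\infty(\N,U)$ I would then obtain the operator $N_\phi$ into $\ell^\infty(\N,\R)$ inheriting all these properties, its partial differential being simply the restriction of $D_1 N^1_\phi$ to $c_0(\N,\R^n)$.

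Next I would reintroduce, exactly as in Proposition~\ref{prop311}, the linear functional $L(\underline{z}) := \sum_{t=0}^{+\infty}\beta^t z_t = \langle(\beta^t)_{t\in\N},\underline{z}\rangle_{\ell^1,\ell^\infty}$, which is continuous on $\ell^\infty(\N,\R)$ by \cite{AB} (Theorem 15.22) since $(\beta^t)_{t\in\N}\in\ell^1(\N,\R)$, hence $C^1$ with $DL = L$. Writing $J = L\circ N_\phi$, continuity of $J$ is immediate as a composition of continuous maps, giving the first assertion. For the partial differentiability I would apply the chain rule for partial derivatives: since $L$ is everywhere Fr\'echet differentiable and $D_1 N_\phi$ exists, $D_1 J(\underline{x},\underline{u})$ exists and equals $DL(N_\phi(\underline{x},\underline{u}))\circ D_1 N_\phi(\underline{x},\underline{u}) = L\circ D_1 N_\phi(\underline{x},\underline{u})$, so that $D_1 J(\underline{x},\underline{u})\underline{\delta x} = \sum_{t=0}^{+\infty}\beta^t D_1\phi(x_t,u_t)\delta x_t$. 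Finally, $D_1 J(\cdot,\underline{u}) = L\circ D_1 N_\phi(\cdot,\underline{u})$ is continuous as a composition of continuous maps, giving $D_1 J(\cdot,\underline{u})\in C^0(c_0(\N,\R^n),{\mathfrak L}(c_0(\N,\R^n),\R))$.

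I expect two delicate points. The first is justifying that the external Theorem~A1.2 of \cite{BCr} indeed delivers the partial statement under these relaxed hypotheses (only continuity of $\phi$ plus partial $C^1$ in the first variable, and with $U$ merely a nonempty subset rather than closed); in particular one must check that $N^1_\phi$ is well defined, i.e.\ that $(\phi(x_t,u_t))_{t\in\N}$ is bounded for bounded $\underline{x}$ and $\underline{u}$, which here rests on boundedness of $\phi$ on the relevant bounded sets rather than on compactness. The second is the partial chain rule itself: one must verify that differentiability of $L$ together with existence and continuity of $D_1 N_\phi$ suffices for $D_1(L\circ N_\phi)$ to exist and to be given by the composed formula, and that passing to the restriction on the closed subspace $c_0(\N,\R^n)$ preserves both the existence of the partial differential and its continuity in $\underline{x}$. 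Everything else is a routine transcription of Proposition~\ref{prop311}.
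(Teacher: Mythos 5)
Your proposal is correct and is essentially the paper's own argument: the paper gives no explicit proof of Proposition~\ref{prop312} beyond the phrase ``using similar arguments,'' meaning precisely the adaptation you spell out — the factorization $J = L\circ N_\phi$ from Proposition~\ref{prop311}, with Theorem~A1.2 of \cite{BCr} invoked in its partial (fixed-$\underline{u}$) form exactly as in conclusion $(\beta)$ of Proposition~\ref{prop310}, followed by restriction to $c_0(\N,\R^n)$ and the chain rule. The delicate points you flag (well-definedness of $N^1_\phi$ when $U$ is not assumed closed) are inherited from the paper's statement itself, not introduced by your argument.
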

\section{ Linear difference equations}
We establish a result on the existence of a solution of a nonhomogeneous linear equation which belongs to $c_0(\N_*, \R^n)$ when the second member belongs to $c_0(\N_*, \R^n)$. These results permit to obtain useful properties on the operator which represents the dynamical system of Problem (P1).
\vskip1mm
\begin{proposition}\label{prop41}
Let $(A_t)_{t \in \N_*}$ be a sequence in ${\mathfrak L}(\R^n, \R^n)$ and $e \in c_0(\N_*, \R^n)$. We consider the following Cauchy problem
\[
(DE)
\left\{
\begin{array}{rcl}
z_{t+1} & = & A_t z_t + e_t\\
z_1 & = & \zeta.
\end{array}
\right.
\]
We assume that $\sup_{t \in \N_*}  \Vert A_t \Vert_{\mathfrak L} < 1$.
Then the solution of (DE) belongs to $c_0(\N_*, \R^n)$.
\end{proposition}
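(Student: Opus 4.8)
The plan is to reduce the vector recursion to a scalar inequality on the norms and to exploit that the contraction factor $k := \sup_{t \in \N_*} \Vert A_t \Vert_{\mathfrak L} < 1$ eventually dominates the vanishing forcing term $e$. First I would note that (DE) has a unique solution $(z_t)_{t \in \N_*}$, since $z_1 = \zeta$ is prescribed and each $z_{t+1}$ is determined from $z_t$ by the recursion; thus ``the solution'' is well defined, and it only remains to prove $z_t \to 0$, i.e. that it lies in $c_0(\N_*, \R^n)$. Since $e \in c_0(\N_*, \R^n)$, the sequence $(\Vert e_t \Vert)_{t}$ converges to $0$ and is in particular bounded; set $M := \sup_{t \in \N_*} \Vert e_t \Vert < +\infty$ and write $a_t := \Vert z_t \Vert$.

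Taking norms in the recursion and using $\Vert A_t z_t \Vert \le \Vert A_t \Vert_{\mathfrak L} \Vert z_t \Vert \le k \Vert z_t \Vert$, I obtain the scalar inequality
$$a_{t+1} \le k\, a_t + \Vert e_t \Vert \le k\, a_t + M \qquad (t \in \N_*).$$
From the right-hand inequality I would first show that $(a_t)_t$ is bounded, by an easy induction: with $C := \max\{ a_1, M/(1-k) \}$ one checks that $a_t \le C$ implies $a_{t+1} \le kC + M \le C$ (the last step uses $C \ge M/(1-k)$), so $a_t \le C$ for all $t$. This boundedness is what makes the following upper-limit computation legitimate.

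The heart of the argument is then a limsup passage. Writing $L := \limsup_{t \to +\infty} a_t$, which is finite and nonnegative by the previous step, I pass to the upper limit in $a_{t+1} \le k a_t + \Vert e_t \Vert$. Since $\Vert e_t \Vert \to 0$ and $k \ge 0$, the right-hand side has upper limit $k L$, while $\limsup_t a_{t+1} = L$; hence $L \le kL$, that is $(1-k) L \le 0$. As $1 - k > 0$ and $L \ge 0$, this forces $L = 0$, and since $a_t \ge 0$ we conclude $a_t \to 0$, i.e. $z \in c_0(\N_*, \R^n)$.

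I expect the main (and only minor) obstacle to be not any deep step but the bookkeeping ensuring the limsup manipulation is valid: one must separately secure boundedness of $(a_t)$ before invoking $\limsup(k a_t + \Vert e_t \Vert) = k \limsup a_t$, and one must use that the forcing term genuinely converges, not merely that it is bounded. As an alternative avoiding the upper-limit trick, I could solve (DE) explicitly by the discrete variation-of-constants formula
$$z_{t+1} = (A_t \cdots A_1)\, \zeta + \sum_{i=1}^{t} (A_t \cdots A_{i+1})\, e_i,$$
yielding $a_{t+1} \le k^t \Vert \zeta \Vert + \sum_{i=1}^{t} k^{t-i} \Vert e_i \Vert$; here the first term vanishes since $k < 1$, and the convolution term vanishes by the standard fact that convolving a null sequence with the summable geometric kernel $(k^m)_m$ produces a null sequence (split the sum at an index beyond which $\Vert e_i \Vert$ is small). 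This route trades the limsup passage for an $\varepsilon$--$N$ splitting of the convolution, which is the more computational but equally elementary option.
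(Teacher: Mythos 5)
Your proposal is correct and follows essentially the same route as the paper: first secure that $(\Vert z_t \Vert)_t$ is bounded, then pass to the upper limit in $\Vert z_{t+1} \Vert \le k \Vert z_t \Vert + \Vert e_t \Vert$ and use $1-k>0$ to force $\limsup_{t \to +\infty} \Vert z_t \Vert = 0$, which is exactly the paper's second step. The only difference is in the bookkeeping for boundedness: the paper unrolls the recursion via the explicit variation-of-constants formula and bounds the geometric sums (your stated alternative), whereas your induction with $C = \max\{ a_1, M/(1-k) \}$ reaches the same bound more directly.
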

\begin{proof}
We denote by $\underline{z}$ the solution of (DE). Doing a straightforward calculation we obtain, for all $t \in \N$, that 
$$z_{t+1} = (A_t \cdot \cdot \cdot A_1)\zeta + \sum_{i= 1}^{t-1} (A_t \cdot \cdot \cdot A_{i+1}) e_i + e_t.$$
Let $M > 0$ such that  $\sup_{t \in \N_*}  \Vert A_t \Vert_{\mathfrak L} \leq M < 1$. Therefore we have 
\[
\begin{array}{rcl}
\Vert z_{t+1} \Vert & \leq & (\prod_{i=1}^t \Vert A_i \Vert_{\mathfrak L}) \Vert \zeta \Vert + \sum_{i=1}^{t-1} (\prod_{j = i + 1}^{t-1} \Vert A_j \Vert_{\mathfrak L}) \Vert e_i \Vert + \Vert e_t \Vert \\
\null & \leq & M^{t} \Vert \zeta \Vert +  (\sum_{i=1}^{t-1} M^{t-i}) \Vert e \Vert_{\infty} + \Vert e \Vert_{\infty} \\
\null & = &  M^{t} \Vert \zeta \Vert + (\sum_{k= 0}^{t-1} M^k ) \Vert e \Vert_{\infty} \\
\null & \leq & \max \{ \Vert \zeta \Vert, \Vert e \Vert_{\infty} \} \sum_{k= 0}^{t} M^k \leq  \max \{ \Vert \zeta \Vert, \Vert e \Vert_{\infty} \}  \frac{1}{1-M} < + \infty
\end{array}
\]
which proves that $\underline{z} \in {\ell}^{\infty}(\N_*, \R^n)$.
\vskip1mm
From the definition of $\underline{z}$, using $ \limsup_{t \rightarrow + \infty} \Vert z_t \Vert  < + \infty$, we deduce
\[
\begin{array}{l}
\Vert z_{t+1} \Vert \leq \Vert A_t \Vert_{\mathfrak L} \cdot \Vert z_t \Vert + \Vert e_t \Vert \leq M \cdot \Vert z_t \Vert + \Vert e_t \Vert \Longrightarrow \\
\limsup_{t \rightarrow + \infty} \Vert z_t \Vert = \limsup_{t \rightarrow + \infty} \Vert z_{t+1} \Vert \leq M \cdot \limsup_{t \rightarrow + \infty} \Vert z_t \Vert + 0 \Longrightarrow \\
(1-M) \limsup_{t \rightarrow + \infty} \Vert z_t \Vert  \leq 0 \Longrightarrow  \limsup_{t \rightarrow + \infty} \Vert z_t \Vert = 0
\end{array}
\]
since $1-M > 0$, and therefore we obtain $ \lim_{t \rightarrow + \infty} z_t = 0$. 
\end{proof}
\begin{corollary}\label{cor42}
Let $(B_t)_{t \in \N_*}$ be a sequence in ${\mathfrak L}(\R^n, \R^n)$ and $d \in c_0(\N_*, \R^n)$. We consider the following Cauchy problem
\[
(DE1)
\left\{
\begin{array}{rcl}
w_{t+1} & = & B_t w_t + d_t\\
w_1 & = & \xi.
\end{array}
\right.
\]
We assume that there exists $t_* \in \N_*$ such that $\sup_{t \geq t_*} \Vert B_t \Vert_{\mathfrak L} < 1$.\\
Then the solution of (DE1) belongs to $c_0(\N_*, \R^n)$.
\end{corollary}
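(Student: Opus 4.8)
The plan is to reduce the statement to Proposition \ref{prop41} by shifting the time index so that the contraction hypothesis holds from the first index on. The key observation is that membership in $c_0(\N_*, \R^n)$ is a property of the tail of the sequence: only the behavior of $w_t$ as $t \to +\infty$ matters, so the finitely many initial terms $w_1, \dots, w_{t_* - 1}$ are irrelevant, and we may restart the recursion at the index $t_*$ beyond which $\sup_{t \geq t_*} \Vert B_t \Vert_{\mathfrak L} < 1$.

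Concretely, first I would introduce the shifted sequence $z_s := w_{s + t_* - 1}$ for $s \in \N_*$, together with the shifted data $A_s := B_{s + t_* - 1}$, $e_s := d_{s + t_* - 1}$, and $\zeta := w_{t_*}$. A one-line computation from (DE1), obtained by setting $t = s + t_* - 1$ in $w_{t+1} = B_t w_t + d_t$, shows that $z_{s+1} = A_s z_s + e_s$ with $z_1 = \zeta$, so that $\underline{z}$ is the solution of a Cauchy problem of the form (DE).

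Next I would check that this shifted problem satisfies the hypotheses of Proposition \ref{prop41}. On the one hand, $\sup_{s \in \N_*} \Vert A_s \Vert_{\mathfrak L} = \sup_{t \geq t_*} \Vert B_t \Vert_{\mathfrak L} < 1$, which is exactly the contraction assumption required there. On the other hand, since $d \in c_0(\N_*, \R^n)$ and $(e_s)_{s \in \N_*}$ is merely a shift of $d$, we have $\lim_{s \to +\infty} e_s = \lim_{t \to +\infty} d_t = 0$, hence $e \in c_0(\N_*, \R^n)$. Applying Proposition \ref{prop41} then gives $\underline{z} \in c_0(\N_*, \R^n)$, i.e. $\lim_{s \to +\infty} z_s = 0$. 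Unwinding the definition $z_s = w_{s + t_* - 1}$, this is precisely $\lim_{t \to +\infty} w_t = 0$, and since a sequence converging to zero is automatically bounded, I conclude that the solution $\underline{w}$ belongs to $c_0(\N_*, \R^n)$.

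I do not expect a genuine obstacle here: the entire content of the corollary is the index shift that converts a tail contraction bound into the global bound demanded by Proposition \ref{prop41}. The only points deserving care are the bookkeeping of the indices in the shifted recursion and the (elementary) remark that a shift of a null sequence is again a null sequence, which is what guarantees that the second member $e$ of the shifted equation indeed lands in $c_0(\N_*, \R^n)$.
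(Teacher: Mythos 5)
Your proposal is correct and follows essentially the same route as the paper: an index shift that turns the tail bound $\sup_{t \geq t_*} \Vert B_t \Vert_{\mathfrak L} < 1$ into the global bound required by Proposition \ref{prop41}, followed by the observation that a shift of a null sequence is null, so the shifted problem satisfies all hypotheses of that proposition. The only (immaterial) difference is the offset: you shift by $t_* - 1$ so the new sequence starts at $w_{t_*}$, while the paper shifts by $t_*$ so it starts at $w_{t_*+1}$; both yield $\lim_{t \to +\infty} w_t = 0$.
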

\begin{proof}
For all $t \in \N_*$, we set $A_t := B_{t + t_*}$ and $e_t := d_{t + t_*}$.\\
 Then we have $\sup_{ t \in \N} \Vert A_t \Vert_{\mathfrak L} < 1$.
We denote by $\underline{w}$ the solution of (DE1). We set $z_t := w_{t + t_*}$ for all $t \in \N$. Then we have $z_{t+1} = w_{t + 1 + t_*} = B_{t+t_*} w_{t+ t_*} + d_{t+t_*} = A_t z_t + e_t$ for all $t \in \N$ and $z_1 = w_{t_* +1}$. Using Proposition \ref{prop41} we obtain $\lim_{t \rightarrow + \infty} z_t = 0$, i.e. $\lim_{t \rightarrow + \infty} w_{t + t_*} = 0$ which implies $\lim_{t \rightarrow + \infty}w_t = 0$.
\end{proof}
\section{Static optimization}
In this section we establish a result in the form of a Karush-Kuhn-Tucker theorem in abstract Banach spaces, and we recall a result issued from the book of Ioffe and Tihomirov \cite{IT}. The first result is useful to prove our weak Pontryagin principles, and the second one is useful to prove our strong Pontryagin principles.
\begin{lemma}\label{lem51} Let ${\mathcal X}$, ${\mathcal V}$, ${\mathcal W}$ be real Banach spaces, and ${\mathcal U}$ be a nonempty subset of ${\mathcal V}$. Let ${\mathcal J} \in C^1({\mathcal X} \times {\mathcal U}, \R)$ and $\Gamma \in C^1({\mathcal X} \times {\mathcal U}, {\mathcal W})$. Let $(\hat{x}, \hat{u})$ be a solution of the following optimization problem
\[
\left\{
\begin{array}{rl}
{\rm Maximize} & {\mathcal J}(x,u)\\
{\rm when} & x \in {\mathcal X}, u \in {\mathcal U}, \Gamma(x,u) = 0.
\end{array}
\right.
\]
We assume that $D_1 \Gamma(\hat{x}, \hat{u})$ is invertible and that ${\mathcal U}$ is star-shaped with rerspect to $\hat{u}$. Then there exists $M \in {\mathcal W}^*$ which satisfies the following conditions.
\begin{itemize}
\item[(i)] $D_1 {\mathcal J}(\hat{x}, \hat{u}) + M \circ D_1 \Gamma(\hat{x}, \hat{u}) = 0$.
\item[(ii)] $\forall u \in {\mathcal U}$, $\langle D_2 {\mathcal J}(\hat{x}, \hat{u}) + M \circ D_2 \Gamma(\hat{x}, \hat{u}), u- \hat{u} \rangle \leq 0$.
\end{itemize}
\end{lemma}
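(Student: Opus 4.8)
The plan is to reduce the constrained problem to an unconstrained one in the variable $u$ by solving the equality constraint for $x$, and then to read off the multiplier from the first-order optimality conditions. Since $\Gamma \in C^1({\mathcal X} \times {\mathcal U}, {\mathcal W})$ in the sense defined above, there is a $C^1$ extension $\Gamma_1$ on ${\mathcal X} \times U_1$ with $U_1$ open and ${\mathcal U} \subset U_1$. Because $D_1 \Gamma(\hat{x}, \hat{u}) = D_1 \Gamma_1(\hat{x}, \hat{u})$ is invertible (hence a Banach-space isomorphism, by the open mapping theorem), the Implicit Function Theorem applies to $\Gamma_1$ at $(\hat{x}, \hat{u})$: there exist an open neighborhood $U_0 \subset U_1$ of $\hat{u}$ and a $C^1$ map $\varphi : U_0 \to {\mathcal X}$ with $\varphi(\hat{u}) = \hat{x}$ and $\Gamma_1(\varphi(u), u) = 0$ for all $u \in U_0$. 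Differentiating this identity at $\hat{u}$ yields $D\varphi(\hat{u}) = -[D_1 \Gamma(\hat{x}, \hat{u})]^{-1} \circ D_2 \Gamma(\hat{x}, \hat{u})$.

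Next I would define the candidate multiplier by $M := -D_1 {\mathcal J}(\hat{x}, \hat{u}) \circ [D_1 \Gamma(\hat{x}, \hat{u})]^{-1}$, which belongs to ${\mathcal W}^*$ since it is the composition of $[D_1 \Gamma(\hat{x}, \hat{u})]^{-1} \in {\mathfrak L}({\mathcal W}, {\mathcal X})$ with $D_1 {\mathcal J}(\hat{x}, \hat{u}) \in {\mathcal X}^*$. Condition (i) is then immediate: $M \circ D_1 \Gamma(\hat{x}, \hat{u}) = -D_1 {\mathcal J}(\hat{x}, \hat{u})$ by the definition of $M$ and cancellation of $[D_1 \Gamma(\hat{x}, \hat{u})]^{-1} \circ D_1 \Gamma(\hat{x}, \hat{u})$.

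For (ii) I would introduce the reduced functional $j(u) := {\mathcal J}(\varphi(u), u)$ on $U_0 \cap {\mathcal U}$. Since $(\hat{x}, \hat{u})$ maximizes ${\mathcal J}$ under the constraint $\Gamma = 0$ and, by the uniqueness part of the Implicit Function Theorem, $x = \varphi(u)$ parametrizes all feasible states near $(\hat{x}, \hat{u})$, the point $\hat{u}$ is a local maximizer of $j$ on $U_0 \cap {\mathcal U}$. Now fix $u \in {\mathcal U}$; by the star-shapedness of ${\mathcal U}$ with respect to $\hat{u}$, the points $(1-\theta)\hat{u} + \theta u$ lie in ${\mathcal U}$ for all $\theta \in [0,1]$, and they lie in $U_0$ for $\theta > 0$ small. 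The scalar function $\theta \mapsto j((1-\theta)\hat{u} + \theta u)$ thus attains a maximum at $\theta = 0$ over a right neighborhood, so its right derivative at $0$ is $\leq 0$, giving $\langle Dj(\hat{u}), u - \hat{u}\rangle \leq 0$. Applying the chain rule, $Dj(\hat{u}) = D_1 {\mathcal J}(\hat{x}, \hat{u}) \circ D\varphi(\hat{u}) + D_2 {\mathcal J}(\hat{x}, \hat{u})$, and substituting the expression for $D\varphi(\hat{u})$ from the first paragraph turns the first term into $M \circ D_2 \Gamma(\hat{x}, \hat{u})$; hence $Dj(\hat{u}) = D_2 {\mathcal J}(\hat{x}, \hat{u}) + M \circ D_2 \Gamma(\hat{x}, \hat{u})$, which is exactly the functional appearing in (ii).

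The main obstacle I anticipate is the bookkeeping around the subset ${\mathcal U}$: the Implicit Function Theorem is an open-set statement, so one must work with the extension $\Gamma_1$ and then check that the feasible points produced by $\varphi$ together with the admissible directions $u - \hat{u}$ stay simultaneously inside ${\mathcal U}$ (via star-shapedness) and inside the neighborhood $U_0$ (via continuity, for $\theta$ small). A second point requiring care is that, because ${\mathcal U}$ need not be open, one only obtains the one-sided variational inequality rather than the equality $D_2 {\mathcal J} + M \circ D_2 \Gamma = 0$; the star-shaped hypothesis is precisely what renders every direction $u - \hat{u}$ admissible and yields inequality (ii) for all $u \in {\mathcal U}$.
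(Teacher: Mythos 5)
Your proposal is correct and follows essentially the same route as the paper's own proof: extend $\Gamma$ to an open set, apply the implicit function theorem to obtain the local solution map $\varphi$ (the paper's $\pi$), define $M := -D_1{\mathcal J}(\hat{x},\hat{u})\circ (D_1\Gamma(\hat{x},\hat{u}))^{-1}$, and use star-shapedness of ${\mathcal U}$ to pass from the local one-sided variational inequality to the inequality for all $u \in {\mathcal U}$. The only cosmetic difference is that you take the right derivative along the segment directly for an arbitrary $u \in {\mathcal U}$, whereas the paper first states the inequality on ${\mathcal N}_{\hat{u}}\cap{\mathcal U}$ and then rescales by $\theta_u$; these are the same argument.
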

\begin{proof}
Let ${\mathcal U}_1$ be an open subset of ${\mathcal V}$ such that ${\mathcal U} \subset {\mathcal U}_1$ and such that there exists $\Gamma_1 \in C^1({\mathcal X} \times {\mathcal U}_1, {\mathcal W})$ such that ${ \Gamma_1}_{\mid_{{\mathcal X} \times {\mathcal U}}} = \Gamma$. Since $D_1 \Gamma_1(\hat{x}, \hat{u}) = D_1 \Gamma (\hat{x}, \hat{u})$ is invertible, we can use the implicit function theorem and assert that there exist ${\mathcal N}_{\hat{x}}$ an open neighborhood of $\hat{x}$ in ${\mathcal X}$, ${\mathcal N}_{\hat{u}}$ an open convex neighborhood of $\hat{u}$ in ${\mathcal U}_1$, and a mapping $\pi \in C^1({\mathcal N}_{\hat{u}}, {\mathcal N}_{\hat{x}})$ such that 
$$\{ (x,u) \in {\mathcal N}_{\hat{x}} \times {\mathcal N}_{\hat{u}} : \Gamma_1(x,u) = 0 \} = \{ (\pi(u), u) : u \in  {\mathcal N}_{\hat{u}} \}.$$
Differentiating $\Gamma_1(\pi(u),u) = 0$ at $\hat{u}$ we obtain $D_1 \Gamma_1 (\hat{x}, \hat{u}) \circ D \pi(\hat{u}) + D_2 \Gamma_1 (\hat{x}, \hat{u}) = 0$ which implies
\begin{equation}\label{eq51}
D \pi (\hat{u}) = - (D_1 \Gamma (\hat{x}, \hat{u}))^{-1} \circ D_2 \Gamma (\hat{x}, \hat{u}).
\end{equation}
Since $(\hat{x}, \hat{u})$ is a solution of the initial problem, $\hat{u}$ is a solution of the following problem
\[
\left\{
\begin{array}{rl}
{\rm Maximize} & {\mathcal B}(u) \\
{\rm when} & u \in {\mathcal N}_{\hat{u}} \cap {\mathcal U}
\end{array}
\right.
\]
where ${\mathcal B}(u) = {\mathcal J}(\pi(u),u)$. Since ${\mathcal B}$ is differentiable (as a composition of differentiable mappings) and ${\mathcal N}_{\hat{u}} \cap {\mathcal U}$ is also star-shaped with respect to $\hat{u}$, a necessary condition of optimality for the last problem is
\begin{equation}\label{eq52}
\forall u \in {\mathcal N}_{\hat{u}} \cap {\mathcal U}, \langle D{\mathcal B}(\hat{u}), u - \hat{u} \rangle \leq 0
\end{equation}
since $0 \geq  \lim_{\theta \rightarrow 0+} \frac{1}{\theta}({\mathcal B}(\hat{u} + \theta (u - \hat{u})) - {\mathcal B}(\hat{u})) =   \langle D{\mathcal B}(\hat{u}), u - \hat{u} \rangle$.  When $u \in {\mathcal U}$, there exists $\theta_u \in (0,1)$ such that $(1- \theta_u) \hat{u}+ \theta_u u \in  {\mathcal N}_{\hat{u}} \cap {\mathcal U}$. Using (\ref{eq52}) we obtain
$$\theta_u \cdot  \langle D{\mathcal B}(\hat{u}), u - \hat{u} \rangle ) = \langle D{\mathcal B}(\hat{u}), \theta_u(u - \hat{u}) \rangle =  \langle D{\mathcal B}(\hat{u}), [(1 - \theta_u) \hat{u} + \theta_u u] - \hat{u} \rangle \leq 0,$$
and so we obtain
\begin{equation}\label{eq53}
\forall u \in {\mathcal U}, \langle D{\mathcal B}(\hat{u}), u - \hat{u} \rangle \leq 0.
\end{equation}
Using the chain rule we obtain
\begin{equation}\label{eq54}
D {\mathcal B}(\hat{u}) = D_1 {\mathcal J}(\hat{x}, \hat{u}) \circ D \pi( \hat{u}) + D_2 {\mathcal J}(\hat{x}, \hat{u}).
\end{equation}
We define 
\begin{equation}\label{eq55}
M := - D_1 {\mathcal J}(\hat{x}, \hat{u}) \circ (D_1 \Gamma (\hat{x}, \hat{u}))^{-1} \in {\mathcal W}^*.
\end{equation}
From (\ref{eq55}) we obtain
\begin{equation}\label{56}
 D_1 {\mathcal J}(\hat{x}, \hat{u}) + M \circ D_1 \Gamma (\hat{x}, \hat{u}) = 0.
 \end{equation}
 Using (\ref{eq54}) and (\ref{eq51}) we obtain $D {\mathcal B}(\hat{u}) = - D_1 {\mathcal J}(\hat{x}, \hat{u}) \circ (D_1 \Gamma (\hat{x}, \hat{u}))^{-1} \circ D_2 \Gamma (\hat{x}, \hat{u}) + D_2 {\mathcal J}(\hat{x}, \hat{u}) = M \circ D_2 \Gamma (\hat{x}, \hat{u}) + D_2 {\mathcal J}(\hat{x}, \hat{u})$, and therefore, from (\ref{eq53}) we obtain
\begin{equation}\label{57} 
\forall u \in {\mathcal U}, \langle  D_2 {\mathcal J}(\hat{x}, \hat{u}) + M \circ D_2 \Gamma (\hat{x}, \hat{u}) , u - \hat{u} \rangle \leq 0.
\end{equation}
\end{proof}
\begin{remark}\label{rem52}
There exist several results like this one in the books \cite{Co} and \cite{We} which use the convexity of $U$. In the necessary conditions of optimality we prefer to avoid the convexity of the sets; it is why we have established this lemma.

\end{remark}
As a corollary of the extremal principle in mixed problems (Theorem 3, p. 71 in \cite{IT}), we obtain the following result.
\begin{lemma}\label{lem53}
 Let ${\mathcal X}$, ${\mathcal V}$, ${\mathcal W}$ be real Banach spaces, and ${\mathcal U}$ be a nonempty subset of ${\mathcal V}$. Let ${\mathcal J} : {\mathcal X} \times {\mathcal U} \rightarrow \R$ and $\Gamma : {\mathcal X} \times {\mathcal U} \rightarrow {\mathcal W}$ be mappings. Let $(\hat{x}, \hat{u})$ be a solution of the following optimization problem
\[
\left\{
\begin{array}{rl}
{\rm Maximize} & {\mathcal J}(x,u)\\
{\rm when} & x \in {\mathcal X}, u \in {\mathcal U}, \Gamma(x,u) = 0.
\end{array}
\right.
\]
We assume that the following conditions are fulfilled.
\begin{itemize}
\item[(a)] For all $u \in {\mathcal U}$, $[x \mapsto \Gamma(x,u)]$ and $[x \mapsto {\mathcal J}(x,u)]$ are of class $C^1$ at $\hat{x}$.
\item[(b)] There exists a neighborhood ${\mathcal N}$ of $\hat{x}$ in ${\mathcal X}$ such that, for all $x \in {\mathcal N}$, for all $u', u'' \in {\mathcal U}$, for all $\theta \in [0,1]$, there exists $u \in {\mathcal U}$ which satisfies the following conditions
\[
\left\{
\begin{array}{ccl}
\Gamma(x,u) & =& (1- \theta) \Gamma(x,u') + \theta \Gamma(x, u'') \\
{\mathcal J}(x,u) & \geq & (1- \theta) {\mathcal J}(x,u') + \theta {\mathcal J}(x,u'').
\end{array}
\right.
\]
\item[(c)] The codimension of Im$D_1 \Gamma(\hat{x}, \hat{u})$ in ${\mathcal W}$ is finite.
\item[(d)] The set $\{ D_1  \Gamma(\hat{x}, \hat{u}) x +  \Gamma(\hat{x}, u) : x \in {\mathcal X}, u \in {\mathcal U} \}$ contains a neighborhood of the origine of ${\mathcal W}$.
\end{itemize}
Then there exists $M \in {\mathcal W}^*$ which satisfies the two following conditions.
\begin{itemize}
\item[(i)] $D_1 {\mathcal J}(\hat{x}, \hat{u}) + M \circ  D_1  \Gamma(\hat{x}, \hat{u}) = 0$.
\item[(ii)] For all $u \in {\mathcal U}$, $ {\mathcal J}(\hat{x}, \hat{u}) + M \Gamma(\hat{x}, \hat{u}) \geq  {\mathcal J}(\hat{x}, u) + M \Gamma(\hat{x}, u)$.
\end{itemize}
\end{lemma}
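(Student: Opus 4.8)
The plan is to deduce the statement directly from the extremal principle for mixed (smooth--convex) problems, Theorem 3, p.~71 in \cite{IT}, by recognizing $x$ as the smooth variable, $u$ as the convex-type variable, and $\Gamma(x,u)=0$ as the operator equality constraint. The key point is that, although neither $\mathcal{U}$ nor the maps $u\mapsto \mathcal{J}(\hat{x},u)$, $u\mapsto \Gamma(\hat{x},u)$ are assumed convex, hypothesis (b) supplies exactly the hidden convexity that the mixed-problem framework requires: it asserts that the set $\{(\Gamma(x,u),\mathcal{J}(x,u)):u\in\mathcal{U}\}$ is convex-like (an exact equality on the $\Gamma$-component and an improving inequality on the cost), uniformly for $x$ in a neighborhood $\mathcal{N}$ of $\hat{x}$. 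First I would rewrite the maximization of $\mathcal{J}$ as the minimization of $-\mathcal{J}$ subject to $\Gamma(x,u)=0$, so as to match the convention of \cite{IT}.

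Next I would verify that hypotheses (a)--(d) translate into the assumptions of that theorem. The $C^1$-regularity at $\hat{x}$ in (a) supplies the differentiability needed for the smooth variable; (b) is the convexity-of-image hypothesis legitimizing the treatment of $u$ as the convex variable; and (c), the finite codimension of $\mathrm{Im}\,D_1\Gamma(\hat{x},\hat{u})$ in $\mathcal{W}$, is the regularity condition guaranteeing that this image is closed and topologically complemented, so that the separation and open-mapping machinery of \cite{IT} applies. Applying the theorem then yields a pair $(\lambda_0,M)\in\R_+\times\mathcal{W}^*$, not both zero, with
$$\lambda_0 D_1\mathcal{J}(\hat{x},\hat{u}) + M\circ D_1\Gamma(\hat{x},\hat{u}) = 0,$$
together with the pointwise principle in $u$: for all $u\in\mathcal{U}$, $\lambda_0\mathcal{J}(\hat{x},\hat{u})+M\Gamma(\hat{x},\hat{u}) \geq \lambda_0\mathcal{J}(\hat{x},u)+M\Gamma(\hat{x},u)$.

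It then remains to exclude the abnormal case $\lambda_0=0$, which is precisely the role of hypothesis (d). Assume $\lambda_0=0$; then $M\circ D_1\Gamma(\hat{x},\hat{u})=0$, and since $\Gamma(\hat{x},\hat{u})=0$ the principle reduces to $M\Gamma(\hat{x},u)\leq 0$ for all $u\in\mathcal{U}$. Hence, for every element $D_1\Gamma(\hat{x},\hat{u})x+\Gamma(\hat{x},u)$ of the set appearing in (d), one has $\langle M, D_1\Gamma(\hat{x},\hat{u})x+\Gamma(\hat{x},u)\rangle = M\Gamma(\hat{x},u)\leq 0$. Since by (d) this set contains a neighborhood of the origin of $\mathcal{W}$, the continuous linear functional $M$ is bounded above by $0$ on a neighborhood of $0$, which forces $M=0$; together with $\lambda_0=0$ this contradicts $(\lambda_0,M)\neq(0,0)$. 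Therefore $\lambda_0>0$, and dividing the two relations by $\lambda_0$ (renaming $M/\lambda_0$ as $M$) yields exactly conclusions (i) and (ii).

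The main obstacle I anticipate is not the normalization, which is routine once (d) is in hand, but the faithful matching of hypothesis (b) with the precise convexity requirement of Theorem 3 in \cite{IT}, together with careful bookkeeping of the sign and normalization conventions (maximization versus minimization, and the placement of the cost multiplier) so that the pointwise principle comes out in the stated direction of (ii). A secondary point to check is that the finite-codimension condition (c) is indeed the regularity hypothesis demanded by \cite{IT} for the smooth part, rather than a surjectivity or closed-range condition; should the theorem there be phrased under a slightly different regularity assumption, I would first show that (c) implies it.
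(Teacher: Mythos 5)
Your proposal follows exactly the route the paper itself takes: the paper offers no written proof of this lemma, stating it simply as a corollary of the extremal principle for mixed (smoothly convex) problems, Theorem 3, p.~71 in \cite{IT}. Your fleshing-out of that citation --- matching (a) to smoothness in $x$, (b) to the convexity-of-image hypothesis, (c) to the finite-codimension regularity condition, converting maximization to minimization, and using (d) to rule out the abnormal multiplier $\lambda_0=0$ so the conclusion can be normalized --- is correct and is precisely what the paper leaves implicit.
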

\section{Weak Pontryagin principle for (P1)}

We start by a translation of Problem (P1) into a more simple abstract optimization problem in Banach spaces.
We define the functional $J_1(\underline{x'}, \underline{u}) := J({\mathcal E}(\underline{x'}),  \underline{u})$ and the nonlinear operator ${\mathcal T}_1(\underline{x'}, \underline{u}) := {\mathcal T}({\mathcal E}(\underline{x'}),  \underline{u})$. Then we can translate (P1) into the following problem.
\[
(P2)
\left\{
\begin{array}{rl}
{\rm Maximize} & J_1(\underline{x'}, \underline{u})\\
{\rm when} & \underline{x'} \in c_0(\N_*, \R^n), \underline{u} \in {\ell}^{\infty}(\N, U)\\
\null & {\mathcal T}_1(\underline{x'}, \underline{u}) = \underline{0}.
\end{array}
\right.
\]
We consider the following list of assumptions.
\begin{itemize}
\item[(A1)] $U$ is a nonempty closed subset of $\R^d$.
\item[(A2)] $\phi \in C^1(\R^n \times U, \R)$ and $f \in C^1(\R^n \times U, \R^n)$.
\item[(A3)] There exists $u^0 \in U$ such that $f(0, u^0) = 0$ and $U$ is star-shaped with respect to $u^0$.
\item[(A4)] $\lim_{x \rightarrow 0}(\sup_{u \in B} \Vert Df(x,u) \Vert_{\mathfrak L}) = 0$ for all nonempty bounded subset $B \subset U$.
\end{itemize} 
\vskip1mm
\noindent
Recall that ${\ell}^1(\N, \R^{n*})$ can be assimilated to the dual topological space of $c_0(\N, \R^n)$, i.e. an element of ${\ell}^1(\N, \R^{n*})$ can be considered as a continuous linear functional on $c_0(\N, \R^n)$, \cite{AB} (Theorem 15.9, p. 498).
\vskip1mm
\begin{lemma}\label{lem61}
We assume (A1-A4) fulfilled. Let $(\hat{\underline{x'}}, \hat{\underline{u}})$ be a solution of (P2). \\
Then there exists $\underline{q} \in {\ell}^1(\N, \R^{n*})$ which satisfies the two following conditions.
\begin{itemize}
\item[(i)] $D_1J_1(\hat{\underline{x'}}, \hat{\underline{u}}) + \underline{q} \circ D_1 {\mathcal T}_1 (\hat{\underline{x'}}, \hat{\underline{u}}) = \underline{0}$.
\item[(ii)] For all $\underline{u} \in {\ell}^{\infty}(\N, U)$, $\langle D_2J_1(\hat{\underline{x'}}, \hat{\underline{u}}) + \underline{q} \circ D_2 {\mathcal T}_1 (\hat{\underline{x'}}, \hat{\underline{u}}), \underline{u} - \hat{\underline{u}} \rangle \leq 0$.
\end{itemize}
\end{lemma}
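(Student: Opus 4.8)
The plan is to recognize (P2) as a particular case of the abstract maximization problem of Lemma \ref{lem51} and to apply that Karush--Kuhn--Tucker result. I would take $\mathcal{X} = c_0(\N_*, \R^n)$, $\mathcal{V} = {\ell}^{\infty}(\N, \R^d)$, $\mathcal{U} = {\ell}^{\infty}(\N, U)$, $\mathcal{W} = c_0(\N, \R^n)$, $\mathcal{J} = J_1$ and $\Gamma = {\mathcal T}_1$. With these identifications the multiplier $M \in \mathcal{W}^{*}$ furnished by Lemma \ref{lem51} becomes, through the identification $(c_0(\N, \R^n))^{*} \cong {\ell}^1(\N, \R^{n*})$ recalled before the statement, the sequence $\underline{q}$ we seek, and the two conclusions of Lemma \ref{lem51} read verbatim as (i) and (ii).

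Most hypotheses of Lemma \ref{lem51} are cheap to verify. The map ${\mathcal E}$ is affine continuous, hence of class $C^1$ with $D{\mathcal E} = {\mathcal F}$ (Remark \ref{rem39}); since $J \in C^1$ by Proposition \ref{prop311} and ${\mathcal T} \in C^1$ by Corollary \ref{cor38} (their hypotheses being exactly (A1)--(A4)), the chain rule gives $J_1 = J \circ ({\mathcal E} \times \mathrm{id}) \in C^1$ and ${\mathcal T}_1 = {\mathcal T} \circ ({\mathcal E} \times \mathrm{id}) \in C^1$ on $c_0(\N_*, \R^n) \times {\ell}^{\infty}(\N, U)$. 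The optimality of $(\hat{\underline{x'}}, \hat{\underline{u}})$ is the hypothesis defining (P2), and the remaining structural requirement is the star-shapedness of ${\ell}^{\infty}(\N, U)$ with respect to $\hat{\underline{u}}$, to be checked componentwise from the star-shapedness of $U$.

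The crux is the last hypothesis: the invertibility of $D_1 {\mathcal T}_1(\hat{\underline{x'}}, \hat{\underline{u}})$ in ${\mathfrak L}(c_0(\N_*, \R^n), c_0(\N, \R^n))$. Setting $\hat{\underline{x}} := {\mathcal E}(\hat{\underline{x'}})$ and combining Corollary \ref{cor38} with $D{\mathcal E} = {\mathcal F}$, I would first compute that $D_1 {\mathcal T}_1(\hat{\underline{x'}}, \hat{\underline{u}}) \underline{\delta x'}$ is the sequence whose $0$-th term equals $\delta x'_1$ and whose $t$-th term for $t \geq 1$ equals $\delta x'_{t+1} - D_1 f(\hat{x}_t, \hat{u}_t) \delta x'_t$ (the $t = 0$ contribution of $D_1 f$ drops out because $({\mathcal F} \underline{\delta x'})_0 = 0$). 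Solving $D_1 {\mathcal T}_1(\hat{\underline{x'}}, \hat{\underline{u}}) \underline{\delta x'} = \underline{e}$ for a given $\underline{e} \in c_0(\N, \R^n)$ is therefore the Cauchy problem $\delta x'_1 = e_0$, $\delta x'_{t+1} = D_1 f(\hat{x}_t, \hat{u}_t) \delta x'_t + e_t$ $(t \geq 1)$; this recursion determines $\underline{\delta x'}$ uniquely, whence injectivity (taking $\underline{e} = \underline{0}$ forces a trivial kernel).

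The one genuine obstacle is to show that this uniquely determined solution actually lies in $c_0(\N_*, \R^n)$, i.e.\ that $D_1 {\mathcal T}_1(\hat{\underline{x'}}, \hat{\underline{u}})$ is onto $c_0(\N, \R^n)$ and not merely onto $(\R^n)^{\N_*}$. Here I would invoke (A4): as $\hat{\underline{u}}$ is bounded, $B := \{ \hat{u}_t : t \in \N \}$ is a bounded subset of $U$, and as $\hat{x}_t \to 0$, the estimate $\Vert D_1 f(\hat{x}_t, \hat{u}_t) \Vert_{\mathfrak L} \leq \sup_{u \in B} \Vert D f(\hat{x}_t, u) \Vert_{\mathfrak L}$ together with (A4) yields $\Vert D_1 f(\hat{x}_t, \hat{u}_t) \Vert_{\mathfrak L} \to 0$. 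Hence there is $t_*$ with $\sup_{t \geq t_*} \Vert D_1 f(\hat{x}_t, \hat{u}_t) \Vert_{\mathfrak L} < 1$, so Corollary \ref{cor42} (applied with $B_t = D_1 f(\hat{x}_t, \hat{u}_t)$, $d_t = e_t$, $\xi = e_0$) forces $\underline{\delta x'} \in c_0(\N_*, \R^n)$. Thus $D_1 {\mathcal T}_1(\hat{\underline{x'}}, \hat{\underline{u}})$ is a continuous linear bijection between Banach spaces and is invertible by the bounded inverse theorem. With the invertibility established and the remaining hypotheses verified, Lemma \ref{lem51} yields the multiplier $\underline{q} \in {\ell}^1(\N, \R^{n*})$ and conclusions (i) and (ii).
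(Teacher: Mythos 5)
Your proposal is correct and follows essentially the same route as the paper's own proof: establish the $C^1$ smoothness of $J_1$ and ${\mathcal T}_1$ via Remark \ref{rem39}, Proposition \ref{prop311} and Corollary \ref{cor38}, use (A4) with the bounded set $\hat{B} = \{ \hat{u}_t : t \in \N \}$ to obtain $\Vert D_1 f(\hat{x}_t,\hat{u}_t)\Vert_{\mathfrak L} \rightarrow 0$ and hence a $t_*$ beyond which these norms stay below $1$, apply Corollary \ref{cor42} to get surjectivity of $D_1 {\mathcal T}_1(\hat{\underline{x'}}, \hat{\underline{u}})$ (injectivity being clear from the recursion), conclude invertibility, and invoke Lemma \ref{lem51} together with the identification $c_0(\N,\R^n)^* = {\ell}^1(\N,\R^{n*})$. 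If anything you are slightly more explicit than the paper (the formula for $D_1{\mathcal T}_1$, the bounded inverse theorem, and the star-shapedness hypothesis of Lemma \ref{lem51}, which the paper passes over in silence --- noting that your claim that it follows ``componentwise'' inherits the same imprecision as the paper, since (A3) gives star-shapedness of $U$ with respect to $u^0$ rather than with respect to each $\hat{u}_t$).
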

\begin{proof}
Using Remark \ref{rem39} and Proposition \ref{prop311}, $J_1$ is of class $C^1$ as a composition of mappings of class $C^1$. Using Remark \ref{rem39} and Corollary \ref{cor38}, ${\mathcal T}_1$ is of class $C^1$ as a composition of operators of class $C^1$.
\vskip1mm
We set $\hat{B} := \{ \hat{u}_t : t \in \N \}$. Then $\hat{B}$ is nonempty bounded in $U$ since $\hat{\underline u} \in {\ell}^{\infty}(\N, U)$. For all $t \in \N_*$, we have
$$\Vert D_1 f(\hat{x'}_t, \hat{u}_t) \Vert_{\mathfrak L} \leq \Vert D f(\hat{x'}_t, \hat{u}_t) \Vert_{\mathfrak L}  \leq \sup_{u \in \hat{B}} \Vert D f(\hat{x}'_t,u) \Vert_{\mathfrak L}$$
and therefore, using (A4), we obtain $\lim_{ t \rightarrow + \infty} \Vert D_1 f(\hat{x'}_t, \hat{u}_t) \Vert_{\mathfrak L} = 0$. Therefore there exists $t_* \in \N$ such that $\sup_{ t \geq t_*} \Vert D_1 f(\hat{x'}_t, \hat{u}_t) \Vert_{\mathfrak L} < 1$.
Note that to solve equation (DE1) of Section 4, with $B_t := D_1f(\hat{x'}_t, \hat{u}_t)$ when $t \in \N_*$, is equivalent to solve the equation $D_1 {\mathcal T}_1(\hat{\underline{x'}}, \hat{\underline{u}}) \underline{\delta x'} = \underline{e}$ where $\underline{e} \in c_0(\N, \R^n)$ and the unknown variable is $\underline{\delta x'} \in c_0(\N_*, \R^n)$. We can use Corollary \ref{cor42} and assert that $D_1 {\mathcal T}_1(\hat{\underline{x'}}, \hat{\underline{u}})$ is surjective and it is clearly injective, and consequently $D {\mathcal T}_1(\hat{\underline{x'}}, \hat{\underline{u}})$ is also invertible. Therefore we can use Lemma \ref{lem51} and assert that there exists a Lagrange multiplier $\underline{q} \in c_0(\N_ù, \R^n)^* = {\ell}^1(\N_ù, \R^{n*})$ which satisfies the announced conclusions.
\end{proof}
\begin{theorem}\label{th62}
We assume (A1-A4) fulfilled. Let $(\hat{\underline{x}}, \hat{\underline{u}})$ be a solution of (P1).
Then there exists $\underline{p} \in {\ell}^1(\N_*, \R^{n*})$ such that the following relations hold.
\vskip1mm
\noindent
{\bf (AE1)} \hskip3mm $p_t = p_{t+1} \circ D_1f(\hat{x}_t, \hat{u}_t) + D_1 \phi(\hat{x}_t, \hat{u}_t) $ for all $t \in \N_*$\\
{\bf (WM1)} \hskip3mm $\langle p_{t+1} \circ D_2f(\hat{x}_t, \hat{u}_t) + D_2 \phi(\hat{x}_t, \hat{u}_t) , u - \hat{u}_t \rangle \leq 0$ for all $u \in U$, for all $t \in \N$.
\end{theorem}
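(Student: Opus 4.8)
The plan is to obtain both relations from Lemma \ref{lem61} by unwinding the abstract multiplier conditions into their coordinatewise form. First I would record that $(\hat{\underline{x}}, \hat{\underline{u}})$ solves (P1) if and only if $(\hat{\underline{x'}}, \hat{\underline{u}})$ solves (P2), where $\hat{\underline{x'}} := (\hat{x}_t)_{t \in \N_*}$ and $\hat{\underline{x}} = {\mathcal E}(\hat{\underline{x'}})$: this is immediate since ${\mathcal E}$ is an affine bijection of $c_0(\N_*, \R^n)$ onto $\{ \underline{x} \in c_0(\N, \R^n) : x_0 = \sigma \}$ carrying $J$ to $J_1$ and ${\mathcal T}$ to ${\mathcal T}_1$. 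Thus Lemma \ref{lem61} applies (its hypothesis, the invertibility of $D_1 {\mathcal T}_1(\hat{\underline{x'}}, \hat{\underline{u}})$, having already been secured there via Corollary \ref{cor42}) and yields $\underline{q} = (q_t)_{t \in \N} \in {\ell}^1(\N, \R^{n*})$ satisfying conditions (i) and (ii).

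Next I would make the four differentials explicit. Using Remark \ref{rem39} (so $D{\mathcal E}(\hat{\underline{x'}}) \underline{\delta x'} = (0, \delta x'_1, \delta x'_2, \dots)$), Corollary \ref{cor38}, Proposition \ref{prop311} and the chain rule, one gets
\[
D_1 J_1(\hat{\underline{x'}}, \hat{\underline{u}}) \underline{\delta x'} = \sum_{t \geq 1} \beta^t D_1 \phi(\hat{x}_t, \hat{u}_t) \delta x'_t, \qquad D_2 J_1(\hat{\underline{x'}}, \hat{\underline{u}}) \underline{\delta u} = \sum_{t \geq 0} \beta^t D_2 \phi(\hat{x}_t, \hat{u}_t) \delta u_t ,
\]
while $D_1 {\mathcal T}_1(\hat{\underline{x'}}, \hat{\underline{u}}) \underline{\delta x'}$ has $0$-th term $\delta x'_1$ and $t$-th term ($t \geq 1$) equal to $\delta x'_{t+1} - D_1 f(\hat{x}_t, \hat{u}_t) \delta x'_t$, and $D_2 {\mathcal T}_1(\hat{\underline{x'}}, \hat{\underline{u}}) \underline{\delta u} = (- D_2 f(\hat{x}_t, \hat{u}_t) \delta u_t)_{t \in \N}$. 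I would then set the adjoint sequence to be the present-value multiplier $p_t := - q_{t-1}$ for $t \in \N_*$; since $\sum_{t \geq 1} \Vert p_t \Vert = \sum_{s \geq 0} \Vert q_s \Vert = \Vert \underline{q} \Vert_{{\ell}^1} < + \infty$, this already gives $\underline{p} \in {\ell}^1(\N_*, \R^{n*})$.

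To reach (AE1) I would pair $\underline{q}$ with $D_1 {\mathcal T}_1(\hat{\underline{x'}}, \hat{\underline{u}}) \underline{\delta x'}$ and shift the index in the term $\sum_{t \geq 1} q_t \delta x'_{t+1}$ (a discrete summation by parts), which regroups the pairing as $\sum_{t \geq 1} (q_{t-1} - q_t \circ D_1 f(\hat{x}_t, \hat{u}_t)) \delta x'_t$. Condition (i) then reads $\sum_{t \geq 1}(\beta^t D_1 \phi(\hat{x}_t, \hat{u}_t) + q_{t-1} - q_t \circ D_1 f(\hat{x}_t, \hat{u}_t)) \delta x'_t = 0$; testing against the finitely supported sequences $\underline{\delta x'}$ forces $q_{t-1} = q_t \circ D_1 f(\hat{x}_t, \hat{u}_t) - \beta^t D_1 \phi(\hat{x}_t, \hat{u}_t)$, which, after substituting $p_t = -q_{t-1}$ and $p_{t+1} = -q_t$, is exactly (AE1). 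For (WM1) I would rewrite condition (ii) with $\underline{\delta u} = \underline{u} - \hat{\underline{u}}$ as $\sum_{t \geq 0}\langle \beta^t D_2 \phi(\hat{x}_t, \hat{u}_t) - q_t \circ D_2 f(\hat{x}_t, \hat{u}_t), u_t - \hat{u}_t\rangle \leq 0$ for all $\underline{u} \in {\ell}^{\infty}(\N, U)$; fixing $s \in \N$ and $u \in U$ and choosing the control equal to $\hat{\underline{u}}$ off coordinate $s$ and to $u$ at coordinate $s$ (which is bounded and $U$-valued, hence admissible) annihilates every term but $t = s$, and substituting $q_s = - p_{s+1}$ gives (WM1).

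Everything after Lemma \ref{lem61} is essentially bookkeeping; I expect the two delicate points to be, first, the discrete summation by parts that produces the backward recurrence, where the boundary index $t = 0$ must be treated separately because ${\mathcal E}$ fixes $x_0 = \sigma$ and so no $\delta x'_0$ occurs; and second, the localization step for (ii), where one must verify that the single-coordinate perturbation of $\hat{\underline{u}}$ stays bounded and $U$-valued so that it is a legitimate competitor in (P2). The convergence of the regrouped series causes no difficulty since the coefficient identities are extracted by testing against finitely supported variations.
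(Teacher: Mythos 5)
Your proposal is correct and follows essentially the same route as the paper's own proof: apply Lemma \ref{lem61} to $(\hat{\underline{x'}},\hat{\underline{u}})$, make the four differentials explicit via Remark \ref{rem39}, Proposition \ref{prop311} and Corollary \ref{cor38}, perform the discrete summation by parts, and localize with finitely supported variations and single-coordinate control perturbations. The only divergence is your choice $p_t := -q_{t-1}$ rather than the paper's $p_t := q_{t-1}$; yours is in fact the sign-consistent choice given conditions (i)--(ii) of Lemma \ref{lem61} as stated (the paper's proof drops a minus sign when passing to its equations (\ref{eq67}) and (\ref{eq69}), which then compensates for its definition of $\underline{p}$), and note that both derivations produce the factor $\beta^t$ in front of $D_1\phi$ and $D_2\phi$, so the absence of $\beta^t$ in the printed (AE1)/(WM1) is a typographical slip in the statement, consistent with (AE)/(WM) of Theorem \ref{th71}.
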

\begin{proof}
We define $\hat{\underline{x'}}$ by setting $\hat{x'}_t := \hat{x}_t$ when $t \in \N_*$. Since $(\hat{\underline{x}}, \hat{\underline{u}})$ is a solution of (P1), $(\hat{\underline{x'}},  \hat{\underline{u}})$ is a solution of (P2). Then Lemma \ref{lem51} provides $\underline{q} \in {\ell}^1(\N, \R^{n*})$ such that 
\begin{equation}\label{eq61}
\left.
\begin{array}{rcl}
D_1J_1(\hat{\underline{x'}}, \hat{\underline{u}}) + \underline{q} \circ D_1{\mathcal T}_1 (\hat{\underline{x'}}, \hat{\underline{u}}) &=& 0\\
\langle D_2J_1(\hat{\underline{x'}}, \hat{\underline{u}}) + \underline{q} \circ D_2{\mathcal T}_1 (\hat{\underline{x'}}, \hat{\underline{u}}), \underline{u} - \hat{\underline{u}} \rangle &\leq& 0
\end{array}
\right\}
\end{equation}
for all $ \underline{u} \in {\ell}^{\infty}(\N, U)$.
Now we translate these conditions to obtain the conclusions of our theorem.
Using Remark \ref{rem39}, Proposition \ref{prop311} and the chain rule we obtain
\[
\begin{array}{rcl}
D_1 J_1(\hat{\underline{x'}}, \hat{\underline{u}}) \underline{\delta x'} &= & D_1 J( {\mathcal E}(\hat{\underline{x'}}),\hat{\underline{u}}) D {\mathcal E}(\hat{\underline{x'}})\underline{\delta x'} = D_1 J( \hat{\underline{x}},\hat{\underline{u}}) {\mathcal F}(\underline{\delta x'})\\
\null &=& \beta^0 D_1 \phi(\sigma, \hat{u}_0) 0 + \sum_{t=1}^{+\infty} \beta^t D_1 \phi(\hat{x}_t, \hat{u}_t) \delta x_t,
\end{array}
\]
and therefore we have
\begin{equation}\label{eq62}
D_1 J_1(\hat{\underline{x'}}, \hat{\underline{u}}) \underline{\delta x'} = \sum_{t=1}^{+\infty} \beta^t D_1 \phi(\hat{x}_t, \hat{u}_t) \delta x'_t.
\end{equation}
Using the same arguments we have
$D_2 J_1(\hat{\underline{x'}}, \hat{\underline{u}}) \underline{\delta u} = D_2 J ( {\mathcal E}(\hat{\underline{x'}}),\hat{\underline{u}}) \underline{\delta u}$
which implies 
\begin{equation}\label{eq63}
 D_2 J_1(\hat{\underline{x'}}, \hat{\underline{u}}) \underline{\delta u} = \sum_{t = 0}^{+\infty} \beta^t D_2 \phi (\hat{x}_t, \hat{u}_t) \delta u_t.
\end{equation}
Using Corollary \ref{cor38} and Remark \ref{rem39} and the chain rule we obtain 
\[
\begin{array}{rcl}
D_1 {\mathcal T}_1(\hat{\underline{x'}}, \hat{\underline{u}}) \underline{\delta x'} &=& D_1{\mathcal T}({\mathcal E}(\hat{\underline{x'}}), \hat{\underline{u}})D{\mathcal E}(\hat{\underline{x'}})\underline{\delta x'}\\
\null & = & D_1{\mathcal T}(\hat{\underline{x}}, \hat{\underline{u}}) {\mathcal F}(\underline{\delta x'})\\
\null & = & (\delta x_1 - D_1 f(\sigma, \hat{u}_0)0, (\delta x_{t+1} - D_1f( \hat{x}_t, \hat{u}_t) \delta x_t)_{ t \in \N_*}),
\end{array}
\]
and therefore we have
\begin{equation}\label{eq64}
D_1 {\mathcal T}_1(\hat{\underline{x'}}, \hat{\underline{u}}) \underline{\delta x'} = (\delta x'_1,  (\delta x'_{t+1} - D_1f( \hat{x}_t, \hat{u}_t) \delta x'_t)_{ t \in \N_*}).
\end{equation}
Using the same arguments, we obtain $D_2 {\mathcal T}_1(\hat{\underline{x'}}, \hat{\underline{u}}) \underline{\delta u} = D_2 {\mathcal T}({\mathcal E}(\hat{\underline{x'}}), \hat{\underline{u}}) \underline{\delta u} = $\\
$ D_2 {\mathcal T}(\hat{\underline{x}}, \hat{\underline{u}}) \underline{\delta u}$ which implies
\begin{equation}\label{eq65}
D_2 {\mathcal T}_1(\hat{\underline{x'}}, \hat{\underline{u}}) \underline{\delta u} = (-D_2 f(\hat{x}_t, \hat{u}_t) \delta u_t)_{ t \in \N}.
\end{equation}
Using (\ref{eq61}) we calculate $\underline{q} \circ D_1 {\mathcal T}_1 (\hat{\underline{x'}}, \hat{\underline{u}}) \underline{\delta x'} = \langle q_0, \delta x'_1 \rangle +
\sum_{t=1}^{+ \infty} \langle q_t, \delta x'_{t+1} \rangle - \sum_{t=1}^{+ \infty} q_t \circ D_1 f(\hat{x}_t, \hat{u}_t) \delta x'_t =  \sum_{t=0}^{+ \infty} \langle q_t, \delta x'_{t+1} \rangle - \sum_{t=1}^{+ \infty} q_t \circ D_1 f(\hat{x}_t, \hat{u}_t) \delta x'_t =
\sum_{t=1}^{+\infty} \langle q_{t-1}, \delta x'_t  \rangle - \sum_{t=1}^{+ \infty} \langle q_t \circ D_1 f(\hat{x}_t, \hat{u}_t), \delta x'_t \rangle$ which implies
\begin{equation}\label{eq66}
\underline{q} \circ D_1 {\mathcal T}_1 (\hat{\underline{x'}}, \hat{\underline{u}}) \underline{\delta x'} =
\sum_{t=1}^{+\infty} \langle q_{t-1} -q_t \circ D_1 f(\hat{x}_t, \hat{u}_t), \delta x'_t  \rangle.
\end{equation}
Using (\ref{eq61}), (\ref{eq62}) and (\ref{eq66}) we obtain
\begin{equation}\label{eq67}
\sum_{t=1}^{+\infty} \beta^t D_1 \phi(\hat{x}_t, \hat{u}_t) \delta x'_t = \sum_{t=1}^{+\infty} \langle q_{t-1} -q_t \circ D_1 f(\hat{x}_t, \hat{u}_t), \delta x'_t  \rangle.
\end{equation}
We fix $t \in \N_*$, we set $\delta x'_s = 0$ when $s \neq t$ and $\delta x'_t$ varies in $\R^n$, then from the last equation we obtain $\beta^t D_1 \phi(\hat{x}_t, \hat{u}_t) = q_{t-1} -q_t \circ D_1 f(\hat{x}_t, \hat{u}_t)$, which implies, for all $t \in \N_*$,
\begin{equation}\label{eq68}
q_{t-1} = q_t \circ D_1 f(\hat{x}_t, \hat{u}_t) + \beta^t D_1 \phi(\hat{x}_t, \hat{u}_t).
\end{equation}
We define $\underline{p} \in {\ell}^1(\N_*, \R^{n*})$ by setting $p_t := q_{t-1}$. Then (\ref{eq68}) implies (AE1).
\vskip1mm
From (\ref{eq65}) we obtain 
$$\sum_{t=0}^{+ \infty} \langle  q_t \circ D_2 f(\hat{x}_t, \hat{u}_t),  u_t - \hat{u}_t \rangle + \langle \underline{q} \circ D_2 {\mathcal T}_1(\hat{\underline{x'}}, \hat{\underline{u}}) \underline{u} - \hat{\underline{u}} \rangle \leq 0 $$
for all $\underline{u} \in {\ell}^{\infty}(\N, U)$, therefore from (\ref{eq61}) and (\ref{eq63}) we obtain 
$$\sum_{t=0}^{+ \infty} \langle  q_t \circ D_2 f(\hat{x}_t, \hat{u}_t),  u_t - \hat{u}_t \rangle + \sum_{t = 0}^{+\infty} \beta^t \langle D_2 \phi (\hat{x}_t, \hat{u}_t),  u_t - \hat{u}_t \rangle \leq 0$$
for all $\underline{u} \in {\ell}^{\infty}(\N, U)$. We fix $t \in \N$, we take $u_s = \hat{u}_s$ when $s \neq t$, and $u_t$ varies in $U$. Then we obtain $\langle  q_t \circ D_2 f(\hat{x}_t, \hat{u}_t) + \beta^t D_2 \phi (\hat{x}_t, \hat{u}_t),  u_t - \hat{u}_t \rangle \leq 0$ which implies, for all $t \in \N$ and for all $u_t \in U$ 
\begin{equation}\label{eq69}
\langle  q_t \circ D_2 f(\hat{x}_t, \hat{u}_t) + \beta^t D_2 \phi (\hat{x}_t, \hat{u}_t),  u_t - \hat{u}_t \rangle \leq 0.
\end{equation}
Replacing $q_t$ by $p_{t+1}$ in this last equation we obtain (WM1). 
\end{proof}
\begin{remark}\label{rem63}
In Theorem \ref{th62}, (AE1) means Adjoint Equation for (P1), (WM1) means Weak Maximum principle for (P1). Since $\underline{p} \in {\ell}^1(\N_*, \R^{n*})$, note that and the transversality condition at infinity for problem (P1), $\lim_{t \rightarrow + \infty} p_t = 0$, is satisfied.
\end{remark}
\section{Weak Pontryagin principle for (P)}
In this section we translate the main result of Section 6 on (P1) into a result on (P).\\
We introduce the following conditions
\begin{itemize}
\item[(B1)] $U$ is a nonempty closed subset of $\R^d$.
\item[(B2)] $\psi \in C^1(\R^n \times U, \R)$ and $g \in C^1(\R^n \times U, \R^n)$.
\item[(B3)] There exists $u^0 \in U$ such that $g(y_{\infty},u^0) = y_{\infty}$ and $U$ is star-shaped with respect to $u^0$.
\item[(B4)] $\lim_{ y \rightarrow y_{\infty}} ( \sup_{u \in B} \Vert Dg(y,u) \Vert)  = 0$ for all nonempty bounded subset $B \subset U$.
\end{itemize}
\begin{theorem}\label{th71}
We assume (B1-B4) fulfilled. let $(\hat{\underline y}, \hat{\underline u})$ be a solution of Problem (P). 
Then there exists $\underline{p} \in {\ell}^1(\N_*, \R^{n*})$ such that the following relations hold.
\vskip1mm
\noindent
{\bf (AE)} \hskip3mm $p_t = p_{t+1} \circ D_1 g(\hat{y}_t, \hat{u}_t) + \beta^t D_1 \psi(\hat{y}_t, \hat{u}_t)$ for all $t \in \N_*$.\\
{\bf (WM)} \hskip3mm $\langle p_{t+1} \circ D_2 g(\hat{y}_t, \hat{u}_t) + \beta^t D_2 \psi(\hat{y}_t, \hat{u}_t) , u - \hat{u} \rangle \leq 0$ for all $u \in U$, for all $t \in \N$
\end{theorem}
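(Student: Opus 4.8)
The plan is to apply the already-proven weak Pontryagin principle for Problem (P1), namely Theorem \ref{th62}, via the equivalence between (P) and (P1) described in Section 2. Recall that with the substitutions $\phi(x,u) = \psi(x + y_\infty, u)$, $f(x,u) = g(x + y_\infty, u) - y_\infty$, $x_t = y_t - y_\infty$ and $\sigma = \eta - y_\infty$, the two problems coincide. So first I would check that the hypotheses (B1--B4) on $(\psi, g, y_\infty)$ translate exactly into the hypotheses (A1--A4) on $(\phi, f)$. Indeed (B1) is literally (A1). For (B2), since $x \mapsto x + y_\infty$ is a smooth diffeomorphism, $\phi = \psi \circ (\cdot + y_\infty, \cdot)$ and $f = g \circ (\cdot + y_\infty, \cdot) - y_\infty$ inherit the $C^1$ regularity, giving (A2). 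For (A3), the condition $f(0, u^0) = g(y_\infty, u^0) - y_\infty = 0$ is exactly (B3), and the star-shapedness of $U$ is unchanged. For (A4), observe that $Df(x,u) = Dg(x + y_\infty, u)$ (the translation by the constant $y_\infty$ has zero derivative and subtracting the constant $y_\infty$ likewise does not affect the differential), so the limit condition $\lim_{x \to 0}(\sup_{u \in B}\Vert Df(x,u)\Vert_{\mathfrak L}) = 0$ is precisely (B4) rewritten with $y = x + y_\infty \to y_\infty$.

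Next I would translate the optimizer. If $(\hat{\underline y}, \hat{\underline u})$ solves (P), then setting $\hat{x}_t := \hat{y}_t - y_\infty$ produces a pair $(\hat{\underline x}, \hat{\underline u})$ with $\hat{\underline x} \in c_0(\N, \R^n)$ (since $\hat{y}_t \to y_\infty$), $\hat{x}_0 = \sigma$, and $\hat{x}_{t+1} = f(\hat{x}_t, \hat{u}_t)$; moreover $J(\hat{\underline x}, \hat{\underline u}) = K(\hat{\underline y}, \hat{\underline u})$ by construction, so $(\hat{\underline x}, \hat{\underline u})$ is a solution of (P1). Having verified (A1--A4) and that $(\hat{\underline x}, \hat{\underline u})$ solves (P1), Theorem \ref{th62} yields $\underline p \in {\ell}^1(\N_*, \R^{n*})$ satisfying (AE1) and (WM1).

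Finally I would substitute back to recover (AE) and (WM). Using the identities $D_1 f(\hat{x}_t, \hat{u}_t) = D_1 g(\hat{y}_t, \hat{u}_t)$ and $D_2 f(\hat{x}_t, \hat{u}_t) = D_2 g(\hat{y}_t, \hat{u}_t)$ (again because the derivative ignores the constant translation $y_\infty$), together with $D_1 \phi(\hat{x}_t, \hat{u}_t) = D_1 \psi(\hat{y}_t, \hat{u}_t)$ and $D_2 \phi(\hat{x}_t, \hat{u}_t) = D_2 \psi(\hat{y}_t, \hat{u}_t)$, the adjoint equation (AE1), which reads $p_t = p_{t+1} \circ D_1 f(\hat{x}_t, \hat{u}_t) + \beta^t D_1 \phi(\hat{x}_t, \hat{u}_t)$, becomes exactly (AE), and similarly (WM1) becomes (WM). Note that the same multiplier $\underline p$ works and lies in the same space ${\ell}^1(\N_*, \R^{n*})$.

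The argument is essentially a change of variables, so there is no deep obstacle; the only place demanding care is the chain-rule bookkeeping confirming that the partial differentials of $\phi$ and $f$ at $(\hat{x}_t, \hat{u}_t)$ equal those of $\psi$ and $g$ at $(\hat{y}_t, \hat{u}_t)$, and that the hypotheses (A1--A4) are genuinely equivalent to (B1--B4) rather than merely implied. Once the dictionary between the two problems is set up cleanly, the theorem follows by direct transcription of (AE1)/(WM1) into (AE)/(WM).
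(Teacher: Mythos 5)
Your proposal is correct and follows exactly the same route as the paper's proof: translate the solution of (P) into a solution of (P1) via $\hat{x}_t = \hat{y}_t - y_\infty$, check that (Bj) implies (Aj) for $j = 1,\dots,4$, invoke Theorem \ref{th62}, and transcribe (AE1)/(WM1) back into (AE)/(WM) with the same multiplier $\underline{p}$. Your write-up is in fact more careful than the paper's three-line proof, since you spell out the chain-rule identities $Df(\hat{x}_t,\hat{u}_t) = Dg(\hat{y}_t,\hat{u}_t)$ and $D\phi(\hat{x}_t,\hat{u}_t) = D\psi(\hat{y}_t,\hat{u}_t)$ that the paper leaves implicit (and you implicitly restore the $\beta^t$ factor that the statement of (AE1)/(WM1) in Theorem \ref{th62} omits but its proof actually establishes).
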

\begin{proof}
Using Section 2, since $(\hat{\underline y}, \hat{\underline u})$ is a solution of (P), $(\hat{\underline x}, \hat{\underline u})$ is a solution of (P1) with $\hat{x}_t = \hat{y}_t - y_{\infty}$. For all $j \in \{ 1,2,3,4 \}$, (Bj) implies (Aj) and so the conclusions of Theorem \ref{th62} hold. We conserve the same $\underline{p}$, and we translate to see that (AE1) implies (AE) and (WM1) implies (WM).
\end{proof}
\section{Strong Pontryagin principle for (P1)}
First we introduce the Hamiltonian of Pontryagin which is defined, for all $t \in \N$, as follows
$$H_t : \R^n \times U \times \R^{n*} \rightarrow \R, H_t(x,u,p) := \beta^t \phi(x,u) + \langle p, f(x,u) \rangle.$$
Note that the condition (WM1) of Theorem \ref{th82} is equivalent to the condition 
$$\langle D_2 H_t(\hat{x}_t, \hat{u}_t, p_{t+1}), u_t - \hat{u}_t \rangle \leq 0$$
for all $u \in U$ and for all $t \in \N$. In this section we want replace (WM1) by the strengthened condition $ H_t(\hat{x}_t, \hat{u}_t, p_{t+1}) = \max_{u \in U}  H_t(\hat{x}_t, u, p_{t+1})$ for all $t \in \N$. Note that (WM1) can be viewed as a first-order necessary condition of the optimality of $ H_t(\hat{x}_t, \cdot , p_{t+1})$ at $\hat{u}_t$ on $U$.
\vskip1mm
We consider the following conditions
\begin{itemize}
\item[(C1)] $U$ is a nonempty compact subset of $\R^d$.
\item[(C2)]  $\phi \in C^0(\R^n \times U, \R)$ and $f \in C^0(\R^n \times U, \R^n)$.
\item[(C3)]  For all $u \in U$, $f(0,u) = 0$.
\item[(C4)]  For all $u \in U$,  $D_1f(x,u)$ and  $D_1\phi (x,u)$ exist for all $x \in \R^n$, and $D_1f( \cdot, u) \in C^0(\R^n, {\mathfrak L}(\R^n, \R^n))$, and $D_1 \phi( \cdot, u) \in C^0(\R^n,\R^{n*})$.
\item[(C5)] $D_1f$ transforms the nonempty bounded subsets of $\R^n \times U$ into bounded subsets of ${\mathfrak L}(\R^n, \R^n)$.

\item[(C6)] For all nonempty bounded subset $B \subset U$, $\lim_{x \rightarrow 0}( \sup_{u \in B} \Vert D_1 f(x,u) \Vert_{\mathfrak L}) = 0$.
\item[(C7)] For all $t \in \N$, for all $x_t \in \R^n$, for all $u'_t$, $u''_t \in U$ and for all $\theta \in (0,1)$, there exists $u_t \in U$ such that
\[
\left\{
\begin{array}{rcl}
\phi(x_t, u_t) & \geq & (1- \theta) \phi(x_t, u'_t) + \theta \phi (x_t, u''_t)\\
f(x_t, u_t) & = & (1- \theta) f(x_t, u'_t) + \theta f (x_t, u''_t).
\end{array}
\right.
\]
\end{itemize}
\begin{lemma}\label{lem81}
Under the assumptions (C1-C7) let $(\hat{\underline{x'}}, \hat{\underline{u}})$ be a solution of Problem (P2) defined in Section 5. Then there exists $\underline{q} \in {\ell}^1(\N, \R^{n*})$ which satisfies the following properties.
\begin{itemize}
\item[(1)] $D_1 J_1(\hat{\underline{x'}}, \hat{\underline{u}}) + \underline{q} D_1 {\mathcal T}_1(\hat{\underline{x'}}, \hat{\underline{u}}) = \underline{0}$.
\item[(2)] $ J_1(\hat{\underline{x'}}, \hat{\underline{u}}) + \langle \underline{q}, N'_f(\hat{\underline{x'}}, \hat{\underline{u}}) \rangle_{c_0, {\ell}^1} = \max_{\underline{u} \in {\ell}^{\infty}(\N, U)} ( J_1(\hat{\underline{x'}}, \underline{u}) + \langle \underline{q}, N'_f(\hat{\underline{x'}}, \underline{u}) \rangle_{c_0, {\ell}^1})$.
\end{itemize}
\end{lemma}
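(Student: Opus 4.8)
The plan is to obtain Lemma \ref{lem81} as an application of the abstract multiplier rule Lemma \ref{lem53} to Problem (P2), with the identifications $\mathcal{X} := c_0(\N_*, \R^n)$, $\mathcal{V} := {\ell}^{\infty}(\N, \R^d)$, $\mathcal{U} := {\ell}^{\infty}(\N, U)$, $\mathcal{W} := c_0(\N, \R^n)$, $\mathcal{J} := J_1$ and $\Gamma := \mathcal{T}_1$. The core of the proof is therefore the verification of the four hypotheses (a)--(d) of Lemma \ref{lem53}, after which conclusion (i) of Lemma \ref{lem53} gives property (1) immediately, and conclusion (ii) must be rewritten into the pointwise maximization form (2).

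First I would verify the regularity hypothesis (a): for each fixed $\underline{u} \in {\ell}^{\infty}(\N, U)$, the partial maps $\underline{x'} \mapsto J_1(\underline{x'}, \underline{u})$ and $\underline{x'} \mapsto \mathcal{T}_1(\underline{x'}, \underline{u})$ are of class $C^1$ at $\hat{\underline{x'}}$. This should follow from Proposition \ref{prop312} (applied to $J$, which gives the partial differentiability in the state under the weaker hypotheses (C2), (C4)) and from Proposition \ref{prop310} (applied to $f$, which under (C2)--(C6) yields $D_1 \mathcal{T}$ and its continuity), composed with the affine operator $\mathcal{E}$ of Remark \ref{rem39}; the composition with a continuous affine map preserves partial $C^1$ regularity. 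The key algebraic hypothesis is (b), which is exactly condition (C7): given $x$, $u'$, $u''$ and $\theta$, one applies (C7) coordinatewise (for each $t \in \N$) to produce the interpolating control $u_t \in U$, and then checks that the resulting sequence $\underline{u} := (u_t)_{t} \in {\ell}^{\infty}(\N, U)$ satisfies $\mathcal{T}_1(\underline{x'}, \underline{u}) = (1-\theta)\mathcal{T}_1(\underline{x'}, \underline{u'}) + \theta\, \mathcal{T}_1(\underline{x'}, \underline{u''})$ and the analogous inequality for $J_1$; boundedness of $\underline{u}$ is automatic because $U$ is compact by (C1), hence bounded.

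Next I would check the two surjectivity-type hypotheses (c) and (d). The crucial point is that, exactly as in the proof of Lemma \ref{lem61}, condition (C6) together with $\hat{\underline{u}} \in {\ell}^{\infty}(\N, U)$ forces $\lim_{t \to +\infty} \Vert D_1 f(\hat{x'}_t, \hat{u}_t) \Vert_{\mathfrak L} = 0$, so there is $t_* \in \N$ with $\sup_{t \geq t_*} \Vert D_1 f(\hat{x'}_t, \hat{u}_t) \Vert_{\mathfrak L} < 1$; Corollary \ref{cor42} then shows that the equation $D_1 \mathcal{T}_1(\hat{\underline{x'}}, \hat{\underline{u}}) \underline{\delta x'} = \underline{e}$ is solvable in $c_0(\N_*, \R^n)$ for every $\underline{e} \in c_0(\N, \R^n)$, i.e. $D_1 \mathcal{T}_1(\hat{\underline{x'}}, \hat{\underline{u}})$ is surjective. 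Hence $\mathrm{Im}\, D_1 \mathcal{T}_1(\hat{\underline{x'}}, \hat{\underline{u}}) = \mathcal{W}$, which gives codimension $0$ for (c) and, a fortiori, contains the whole space (hence a neighborhood of the origin) for (d). This reuse of the Section 4 machinery is the technical heart of the argument; the main obstacle I expect is purely bookkeeping, namely translating hypothesis (d)'s set $\{ D_1\mathcal{T}_1(\hat{\underline{x'}}, \hat{\underline{u}})\underline{x'} + \mathcal{T}_1(\hat{\underline{x'}}, \underline{u}) \}$ correctly, but surjectivity of $D_1\mathcal{T}_1$ already dominates the linear part.

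Finally, having produced $\underline{q} \in \mathcal{W}^* = c_0(\N, \R^n)^* = {\ell}^1(\N, \R^{n*})$ satisfying conclusions (i) and (ii) of Lemma \ref{lem53}, property (1) is conclusion (i) verbatim. To get property (2), I would rewrite conclusion (ii), which reads $J_1(\hat{\underline{x'}}, \hat{\underline{u}}) + \underline{q}\, \mathcal{T}_1(\hat{\underline{x'}}, \hat{\underline{u}}) \geq J_1(\hat{\underline{x'}}, \underline{u}) + \underline{q}\, \mathcal{T}_1(\hat{\underline{x'}}, \underline{u})$ for all $\underline{u} \in {\ell}^{\infty}(\N, U)$. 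Using $\mathcal{T}_1 = \Lambda - N'_f$ with $\Lambda$ not depending on $\underline{u}$ (from Corollary \ref{cor38} and Remark \ref{rem39}), the term $\underline{q}\,\Lambda$ cancels on both sides, leaving $J_1(\hat{\underline{x'}}, \hat{\underline{u}}) + \langle \underline{q}, N'_f(\hat{\underline{x'}}, \hat{\underline{u}}) \rangle_{c_0, {\ell}^1} \geq J_1(\hat{\underline{x'}}, \underline{u}) + \langle \underline{q}, N'_f(\hat{\underline{x'}}, \underline{u}) \rangle_{c_0, {\ell}^1}$; since $\hat{\underline{u}}$ is itself admissible, the supremum is attained, yielding the $\max$ in (2).
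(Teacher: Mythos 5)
Your proposal follows the paper's proof essentially verbatim: both apply Lemma \ref{lem53} with ${\mathcal J}=J_1$ and $\Gamma={\mathcal T}_1$, verify hypothesis (a) via Propositions \ref{prop310} and \ref{prop312} together with Remark \ref{rem39}, hypothesis (b) via (C7) and the boundedness of $U$, and hypotheses (c)--(d) via the Corollary \ref{cor42} surjectivity argument already used in Lemma \ref{lem61}, then take $\underline{q}=M$. The only differences are cosmetic: you obtain (c)--(d) from surjectivity alone where the paper invokes injectivity and the Banach isomorphism theorem, and you spell out the cancellation of the $\Lambda$-term in conclusion (ii), which the paper leaves implicit (note only that ${\mathcal T}_1=\Lambda-N'_f$ strictly produces a minus sign before $\langle\underline{q},N'_f\rangle$, a sign quirk inherited from the paper's own statement of property (2)).
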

\begin{proof}
We want to use Lemma \ref{lem53} with ${\mathcal J} = J_1$, $\Gamma = {\mathcal T}_1$.\\
Since (C1-C6) imply that $U$ is closed and that the conditions (a, b, c, d, e) of Proposition \ref{prop310} hold, we obtain that ${\mathcal T}$ and $D_1 {\mathcal T}( \cdot, \underline{u})$ are continuous, and using Remark \ref{rem39} we obtain that ${\mathcal T}_1$ and $D_1 {\mathcal T}_1( \cdot, \underline{u})$ are continuous. Using Proposition \ref{prop312}, from (C2) and (C4) we obtain that $J$ and $D_1J( \cdot, \underline{u})$ are continuous, and using Remark \ref{rem39} we obtain that $J_1$ and $D_1J_1( \cdot, \underline{u})$ are continuous. And so the assumption (a) of Lemma \ref{lem53} is fulfilled.\\
Since $U$ is bounded, from (C7) we obtain assumption (b) of Lemma \ref{lem53}.\\
Proceeding as in the proof of Lemma \ref{lem61}, from (C6), with $B_t := D_1f(\hat{x}_t, \hat{u}_t)$, we obtain the assumptions of Corollary \ref{cor42} which implies that\\
 $D_1 {\mathcal T}_1(\hat{\underline{x'}}, \hat{\underline{u}})$ is surjective from $c_0(\N_*, \R^n)$ onto $c_0(\N, \R^n)$, and since it is clearly injective, it is invertible. Using the Isomorphism Theorem of Banach, this invertibility implies the assumptions (c) and (d) of Lemma \ref{lem53}. \\
Consequently we can use Lemma \ref{lem53} and we obtain the conclusions with $\underline{q} = M$.   
\end{proof}
\begin{theorem}\label{th82}
Under the assumptions (C1-C7), let $(\hat{\underline{x}}, \hat{\underline{u}})$ be a solution of Problem (P) defined in Section 5. Then there exists $\underline{p} \in {\ell}^1(\N_*, \R^{n*})$ which satisfies the following properties.
\vskip1mm
\noindent
{\bf (AE1)} \hskip3mm $D_1 \phi(\hat{x}_t, \hat{u}_t) + p_{t+1} \circ D_1 f(\hat{x}_t, \hat{u}_t) = 0$ for all $t \in \N_*$.\\
{\bf (MP1)} \hskip3mm  $ \phi(\hat{x}_t, \hat{u}_t) + \langle p_{t+1},f(\hat{x}_t, \hat{u}_t) \rangle = \max_{u \in  U} ( \phi(\hat{x}_t, u) + \langle p_{t+1}, f(\hat{x}_t,u) \rangle )$  for all $t \in \N$.
\end{theorem}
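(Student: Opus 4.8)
The plan is to run the same translation machinery already developed for the weak principle (Theorem~\ref{th62}), but starting from Lemma~\ref{lem81} instead of Lemma~\ref{lem61}. First I would define $\hat{\underline{x'}}$ by $\hat{x'}_t := \hat{x}_t$ for $t \in \N_*$, so that the given solution $(\hat{\underline{x}}, \hat{\underline{u}})$ of (P1) corresponds to a solution $(\hat{\underline{x'}}, \hat{\underline{u}})$ of the abstract problem (P2). I would then verify that the hypotheses (C1--C7) are exactly what Lemma~\ref{lem81} requires, and invoke it to produce a multiplier $\underline{q} \in {\ell}^1(\N, \R^{n*})$ satisfying conclusions (1) and (2) of that lemma. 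The adjoint equation (AE1) is then obtained from conclusion (1) by precisely the computation carried out in the proof of Theorem~\ref{th62}: I would re-use the formulas \eqref{eq62} for $D_1 J_1$, \eqref{eq64} for $D_1 {\mathcal T}_1$, and the summation-by-parts identity \eqref{eq66} for $\underline{q} \circ D_1 {\mathcal T}_1$, extract the coefficient of $\delta x'_t$ by localizing (setting $\delta x'_s = 0$ for $s \neq t$), and finally set $p_t := q_{t-1}$ to land in ${\ell}^1(\N_*, \R^{n*})$. This reproduces (AE1) verbatim.

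The genuinely new part is deriving the maximum principle (MP1) from the global inequality (2) of Lemma~\ref{lem81}, rather than from a first-order variational inequality. Here I would first identify the operator $N'_f$ appearing in Lemma~\ref{lem81}: unwinding the definitions ${\mathcal T}_1 = {\mathcal T} \circ {\mathcal E}$ and ${\mathcal T}(\underline{x}, \underline{u}) = (x_{t+1} - f(x_t,u_t))_{t}$, the term $\langle \underline{q}, N'_f(\hat{\underline{x'}}, \underline{u}) \rangle_{c_0, {\ell}^1}$ should expand, using the duality pairing between ${\ell}^1(\N, \R^{n*})$ and $c_0(\N, \R^n)$, into $\sum_{t=0}^{+\infty} \langle q_t, f(\hat{x}_t, u_t) \rangle$. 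Combining this with $J_1(\hat{\underline{x'}}, \underline{u}) = \sum_{t=0}^{+\infty} \beta^t \phi(\hat{x}_t, u_t)$ from \eqref{eq62}--\eqref{eq63}, conclusion (2) reads
\[
\sum_{t=0}^{+\infty} \bigl( \beta^t \phi(\hat{x}_t, \hat{u}_t) + \langle q_t, f(\hat{x}_t, \hat{u}_t) \rangle \bigr) = \max_{\underline{u} \in {\ell}^{\infty}(\N, U)} \sum_{t=0}^{+\infty} \bigl( \beta^t \phi(\hat{x}_t, u_t) + \langle q_t, f(\hat{x}_t, u_t) \rangle \bigr).
\]

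To pass from this global maximization over sequences to the pointwise maximum at each index, I would use the standard localization argument: fix $t_0 \in \N$, pick an arbitrary $u \in U$, and take the competitor sequence $\underline{u}$ with $u_{t_0} := u$ and $u_s := \hat{u}_s$ for $s \neq t_0$ (this competitor lies in ${\ell}^{\infty}(\N, U)$ since $U$ is compact, hence bounded, by (C1)). All terms with $s \neq t_0$ cancel between the two sides, leaving $\beta^{t_0} \phi(\hat{x}_{t_0}, \hat{u}_{t_0}) + \langle q_{t_0}, f(\hat{x}_{t_0}, \hat{u}_{t_0}) \rangle \geq \beta^{t_0} \phi(\hat{x}_{t_0}, u) + \langle q_{t_0}, f(\hat{x}_{t_0}, u) \rangle$ for every $u \in U$. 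Dividing by $\beta^{t_0} > 0$ and writing $p_{t_0+1} := q_{t_0}/\beta^{t_0}$ yields (MP1); I would check this normalization is consistent with the $p_t := q_{t-1}$ used for (AE1) and absorb the factor $\beta^t$ appropriately, matching the statement's normalization where $\phi$ and $f$ appear without the $\beta^t$ weight on $f$.

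The main obstacle I anticipate is bookkeeping the discount factor $\beta^t$ consistently across the two conclusions. In the weak principle the adjoint variable $p_t = q_{t-1}$ carried the $\beta^t$ inside $D_1\phi$, whereas the Hamiltonian formulation of (MP1) as stated places $\phi$ and $f$ on equal footing without an explicit $\beta^t$ on the $f$-term, so the definition of $p_{t+1}$ in terms of $q_t$ must reconcile the $\beta^t$-weighted criterion term with the unweighted constraint pairing. Getting a single $\underline{p}$ that simultaneously satisfies (AE1) and (MP1) requires care in how the factor is distributed; concretely I expect $p_{t+1} := \beta^{-(t+1)} q_t$ or an equivalent rescaling, and I would need to verify that the resulting $\underline{p}$ still lies in ${\ell}^1(\N_*, \R^{n*})$, which follows since $\underline{q} \in {\ell}^1$ and the geometric weights $\beta^{-t}$ could in principle destroy summability—so the transversality/summability check is the delicate point and should be handled by observing that the stated (AE1) and (MP1) are written precisely so that one consistent choice works.
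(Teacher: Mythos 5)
Your overall skeleton is the paper's own: its proof of Theorem \ref{th82} consists exactly of invoking Lemma \ref{lem81} and translating its two conclusions, with conclusion (1) handled by the computation of Theorem \ref{th62} and conclusion (2) handled by the pointwise localization you describe (freeze all coordinates of $\underline{u}$ except one, which lies in ${\ell}^{\infty}(\N,U)$ since $U$ is compact); your reading of the pairing as $\langle \underline{q}, N'_f(\hat{\underline{x'}}, \underline{u}) \rangle_{c_0, {\ell}^1} = \sum_{t=0}^{+\infty} \langle q_t, f(\hat{x}_t, u_t) \rangle$ is also the intended one. Up to that point the proposal matches what the paper's two-line proof tacitly does.

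The genuine gap is the step you explicitly postpone: producing \emph{one} multiplier $\underline{p}$ that serves both conclusions. Note first that your Theorem \ref{th62}-style computation with $p_t := q_{t-1}$ does not reproduce the printed (AE1) ``verbatim'': it yields the relation (\ref{eq68}), i.e. $p_t = p_{t+1} \circ D_1 f(\hat{x}_t, \hat{u}_t) + \beta^t D_1 \phi(\hat{x}_t, \hat{u}_t)$, which has a $p_t$ term and a weight $\beta^t$, whereas the printed (AE1) has neither. Likewise, your localization of conclusion (2) yields $\beta^t \phi(\hat{x}_t, \hat{u}_t) + \langle q_t, f(\hat{x}_t, \hat{u}_t) \rangle \geq \beta^t \phi(\hat{x}_t, u) + \langle q_t, f(\hat{x}_t, u) \rangle$, i.e. with $p_{t+1} := q_t$ the maximum of the Hamiltonian $H_t = \beta^t \phi + \langle p, f \rangle$ announced at the head of Section 8: the weight $\beta^t$ stays attached to $\phi$. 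Your attempt to remove that weight by the rescaling $p_{t+1} := \beta^{-t} q_t$ (or $\beta^{-(t+1)} q_t$) is the step that fails, twice over: first, such a $\underline{p}$ need not belong to ${\ell}^1(\N_*, \R^{n*})$ (take $\Vert q_t \Vert = \beta^t$, so $\underline{q} \in {\ell}^1$ while $\Vert p_{t+1} \Vert \equiv 1$ is not even a null sequence), which destroys the conclusion $\underline{p} \in {\ell}^1$ and the transversality property of Remark \ref{rem63}; second, it cannot be reconciled with (AE1): inserting $p_{t+1} = \beta^{-t} q_t$ into the printed identity $D_1\phi + p_{t+1} \circ D_1 f = 0$ and comparing with $q_{t-1} = q_t \circ D_1 f(\hat{x}_t, \hat{u}_t) + \beta^t D_1 \phi(\hat{x}_t, \hat{u}_t)$ obtained from conclusion (1) forces $q_{t-1} = 0$ for every $t \in \N_*$, i.e. a trivial multiplier. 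So the hoped-for ``one consistent choice'' realizing the formulas exactly as printed does not exist; the printed (AE1) (missing the $p_t$ term) and (MP1) (missing the $\beta^t$ weight) are misprints --- compare Theorem \ref{th71} and the Hamiltonian of Section 8 --- and the proof should simply commit to $p_t := q_{t-1}$, as the paper does, and assert both conclusions in the $\beta$-weighted form rather than chase the printed normalization with a rescaling.
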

\begin{proof}
Proceeding as in the proof of Theorem \ref{th62}, conclusion (1) of Lemma \ref{lem81} implies (AE1). A straightforward translation of conclusion (2) of Lemma \ref{lem81} provides (MP1).
\end{proof}
\section{Strong Pontryagin principle for (P)}
In this section we translate the strong Pontryagin principle on (P1) into a result on (P).\\
We consider the following conditions.
\begin{itemize}
\item[(D1)] $U$ is a nonempty compact subset of $\R^d$.
\item[(D2)] $\psi \in C^0(\R^n \times U, \R)$ and $g \in C^0(\R^n \times U, \R^n)$.
\item[(D3)] For all $u \in U$, $g(y_{\infty}, u) = y_{\infty}$.
\item[(D4)] For all $(y,u) \in \R^n \times U$, $D_1 \psi (y,u)$ and $D_1g(y,u)$ exist and, for all $u \in U$, $D_1 \psi ( \cdot, u) \in C^0(\R^n, \R^{n*})$, $D_1 g( \cdot , u) \in C^0( \R^n, {\mathfrak L}(\R^n, \R^n))$.
\item[(D5)] $D_1 g$ transforms the nonempty bounded subsets of $\R^n \times U$ in bounded subsets of ${\mathfrak L}(\R^n, \R^n)$.
\item[(D6)] $\lim_{y \rightarrow y_{\infty}} (\sup_{u \in B} \Vert D_1 g(y,u) \Vert_{\mathfrak L} ) = 0$ for all nonempty bounded subset $B$ of $U$.
\item[(D7)] For all $t \in \N$, for all $y_t \in \R^n$, for all $u'_t$, $u''_t \in U$ and for all $\theta \in (0,1)$, there exists $u_t \in U$ such that
\[
\left\{
\begin{array}{rcl}
\psi(y_t, u_t) & \geq & (1- \theta) \psi(y_t, u'_t) + \theta \psi (y_t, u''_t)\\
g(y_t, u_t) & = & (1- \theta) g(y_t, u'_t) + \theta g (y_t, u''_t).
\end{array}
\right.
\]
\end{itemize}
\begin{theorem}\label{th91}
Under the assumptions (D1-D7) let $(\hat{\underline{y}}, \hat{\underline{u}})$ be a solution of Problem (P).  Then there exists $\underline{p} \in {\ell}^1(\N_*, \R^{n*})$ which satisfies the following properties.
\vskip1mm
\noindent
{\bf (AE)} $D_1 \psi(\hat{y}_t, \hat{u}_t) + p_{t+1} \circ D_1 g(\hat{y}_t, \hat{u}_t) = 0$ for all $t \in \N_*$.\\
{\bf (MP)} $ \psi(\hat{y}_t, \hat{u}_t) + \langle p_{t+1}, g(\hat{y}_t, \hat{u}_t) \rangle = \max_{u \in  U} ( \psi(\hat{y}_t, u) + \langle p_{t+1}, g(\hat{y}_t,u) \rangle )$  for all $t \in \N$.
\end{theorem}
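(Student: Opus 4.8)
The plan is to follow exactly the translation strategy already used to pass from Theorem \ref{th62} to Theorem \ref{th71}, but now applied to the strong principle of Theorem \ref{th82}. The starting point is the equivalence between Problem (P) and Problem (P1) established in Section 2: setting $\hat{x}_t := \hat{y}_t - y_{\infty}$, $\phi(x,u) = \psi(x+y_{\infty},u)$ and $f(x,u) = g(x+y_{\infty},u)-y_{\infty}$, the pair $(\hat{\underline{x}},\hat{\underline{u}})$ is a solution of (P1) whenever $(\hat{\underline{y}},\hat{\underline{u}})$ is a solution of (P). So the first step is simply to record this reduction and to apply the chain rule to the defining relations: since $\phi(x,u)=\psi(x+y_{\infty},u)$ we get $D_1\phi(x,u)=D_1\psi(x+y_{\infty},u)$ and $D_2\phi(x,u)=D_2\psi(x+y_{\infty},u)$, and likewise $D_1f(x,u)=D_1g(x+y_{\infty},u)$, $D_2f(x,u)=D_2g(x+y_{\infty},u)$, because the translation by the constant $y_{\infty}$ and the subtraction of the constant $y_{\infty}$ have trivial differentials.

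The second step is to verify that the hypotheses (D1--D7) on $(\psi,g,y_{\infty})$ translate into the hypotheses (C1--C7) on $(\phi,f)$ required by Theorem \ref{th82}. This is a routine check item by item, entirely parallel to the sentence ``for all $j\in\{1,2,3,4\}$, (Bj) implies (Aj)'' in the proof of Theorem \ref{th71}. Concretely: (D1) is identical to (C1); (D2) gives (C2) since composition with the affine translation preserves $C^0$; (D3), namely $g(y_{\infty},u)=y_{\infty}$, becomes exactly $f(0,u)=g(y_{\infty},u)-y_{\infty}=0$, which is (C3); (D4) yields (C4) by the chain-rule identities above; (D5) gives (C5) because $D_1f(x,u)=D_1g(x+y_{\infty},u)$ and the map $x\mapsto x+y_{\infty}$ sends bounded sets to bounded sets; (D6) gives (C6) since $\lim_{x\to 0}$ for $D_1f(x,u)$ corresponds to $\lim_{y\to y_{\infty}}$ for $D_1g(y,u)$; and (D7) gives (C7) by substituting $x_t=y_t-y_{\infty}$ into the convexity-type condition, the equality on $f$ following from the corresponding equality on $g$ after subtracting the constant $y_{\infty}$ from both sides.

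The final step is to apply Theorem \ref{th82}, which furnishes $\underline{p}\in{\ell}^1(\N_*,\R^{n*})$ satisfying (AE1) and (MP1) for $(\phi,f)$, and then to rewrite these two conclusions in terms of $(\psi,g)$ using the chain-rule identities and $\hat{x}_t=\hat{y}_t-y_{\infty}$. Keeping the same multiplier $\underline{p}$, condition (AE1), $D_1\phi(\hat{x}_t,\hat{u}_t)+p_{t+1}\circ D_1f(\hat{x}_t,\hat{u}_t)=0$, becomes $D_1\psi(\hat{y}_t,\hat{u}_t)+p_{t+1}\circ D_1g(\hat{y}_t,\hat{u}_t)=0$, which is exactly (AE). Similarly, (MP1) becomes (MP) once we observe that for every $u\in U$ one has $\phi(\hat{x}_t,u)+\langle p_{t+1},f(\hat{x}_t,u)\rangle=\psi(\hat{y}_t,u)+\langle p_{t+1},g(\hat{y}_t,u)\rangle-\langle p_{t+1},y_{\infty}\rangle$; the additive constant $-\langle p_{t+1},y_{\infty}\rangle$ is independent of $u$, so it cancels when comparing the value at $\hat{u}_t$ with the maximum over $u\in U$, and the maximization statement is preserved. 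I do not expect a genuine obstacle here: the whole argument is a verification that the reduction of Section 2 is compatible with the hypotheses and with the form of the conclusions. The one point requiring a little care is precisely this cancellation of the constant $\langle p_{t+1},y_{\infty}\rangle$ in the maximum principle, which is why the maximized Hamiltonian, not merely a first-order inequality, survives the translation intact.
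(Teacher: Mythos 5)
Your proposal is correct and follows essentially the same route as the paper: reduce (P) to (P1) via $\hat{x}_t = \hat{y}_t - y_\infty$, observe that each (Dj) translates into the corresponding (Cj) for $(\phi,f)$, apply Theorem \ref{th82}, and translate (AE1), (MP1) back while keeping the same multiplier $\underline{p}$. In fact your write-up is slightly more careful than the paper's (which states the implication between (Cj) and (Dj) in the reversed direction, evidently a slip), and your explicit remark that the $u$-independent constant $\langle p_{t+1}, y_\infty\rangle$ cancels in the maximum principle is exactly the point that makes the translation of (MP1) into (MP) legitimate.
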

\begin{proof}
Using Section 2, since $(\hat{\underline{y}}, \hat{\underline{u}})$ is a solution of Problem (P),  $(\hat{\underline{x}}, \hat{\underline{u}})$ is a solution of Problem (P1). For all $j \in \{ 1,...,7 \}$ note that (Cj) implies (Dj). Therefore the assumptions of Theorem \ref{th82} are fulfilled, and so its conclusions hold. Using Section 2, we conserve $\hat{\underline{u}}$ and $\underline{p}$, we set $\hat{x}_t = \hat{y}_t - y_{\infty}$ for all $t \in \N$, and the translation of (AE1) gives (AE) and the translation of (MP1) gives (MP).
\end{proof}
\section{Sufficient conditions for (P1)}
In this section we establish a resullt of sufficient condition of optimality which uses the adjoint equation and the weak maximum principle and the concavity of the Hamiltonian with respect the state variable and the control variable.
\begin{theorem}\label{th101}
Let $U$ be a nonempty convex subset of $\R^d$, $\beta \in (0,1)$, $\sigma \in \R^n$ and two mappings $\phi : \R^n \times U \rightarrow \R$ and $f : \R^n \times U \rightarrow \R^n$. \\
Let $(\hat{\underline{x}}, \hat{\underline{u}}) \in c_0(\N, \R^n) \times  {\ell}^{\infty}(\N, U)$ and $\underline{p} \in {\ell}^1(\N_*, \R^{n*})$. Assume that the following conditions hold.
\begin{itemize}
\item[(i)] $\hat{x}_{t+1} = f(\hat{x}_t, \hat{u}_t)$ for all $t \in \N$, and $\hat{x}_0 = \sigma$.
\item[(ii)] $\phi \in C^1( \R^n \times U , \R)$ and $f \in C^1(\R^n \times U , \R^n)$.
\item[(iii)]  $\phi$ transforms bounded subsets of $\R^n \times U$ into bounded subsets of $\R$. 
\item[(iv)]  $p_t = p_{t+1} \circ D_1f(\hat{x}_t, \hat{u}_t) + \beta^t D_1 \phi(\hat{x}_t, \hat{u}_t)$ for all $t \in \N_*$.
\item[(v)] $\langle p_{t+1} \circ D_2f(\hat{x}_t, \hat{u}_t) + D_2 \phi(\hat{x}_t, \hat{u}_t) , u - \hat{u}_t \rangle \leq 0$ for all $u \in U$, for all $t \in \N$. 
\item[(vi)] The function $[(x,u) \mapsto \langle p_{t+1}, f(x,u) \rangle +  \beta^t \phi(x,u))]$ is concave on $\R^n \times U$ for all $t \in \N$.
\end{itemize}
Then $(\hat{\underline{x}}, \hat{\underline{u}})$ is a solution of (P1).
\end{theorem}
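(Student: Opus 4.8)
The plan is to show that $(\hat{\underline{x}},\hat{\underline{u}})$ dominates every admissible pair. First I would fix an arbitrary admissible pair $(\underline{x},\underline{u})\in c_0(\N,\R^n)\times\ell^{\infty}(\N,U)$ for (P1), that is one with $x_0=\sigma$ and $x_{t+1}=f(x_t,u_t)$ for all $t\in\N$, and aim to prove $J(\hat{\underline{x}},\hat{\underline{u}})\ge J(\underline{x},\underline{u})$. The whole argument rests on the concavity hypothesis (vi): for each $t$ the function $\Phi_t(x,u):=\beta^t\phi(x,u)+\langle p_{t+1},f(x,u)\rangle$ is concave on $\R^n\times U$, hence lies below its tangent at $(\hat{x}_t,\hat{u}_t)$,
$$\Phi_t(x_t,u_t)-\Phi_t(\hat{x}_t,\hat{u}_t)\le \langle D_1\Phi_t(\hat{x}_t,\hat{u}_t),x_t-\hat{x}_t\rangle+\langle D_2\Phi_t(\hat{x}_t,\hat{u}_t),u_t-\hat{u}_t\rangle.$$

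Next I would identify the two differentials through the first-order conditions. The adjoint equation (iv) reads exactly $D_1\Phi_t(\hat{x}_t,\hat{u}_t)=p_{t+1}\circ D_1f(\hat{x}_t,\hat{u}_t)+\beta^t D_1\phi(\hat{x}_t,\hat{u}_t)=p_t$ for $t\in\N_*$, while the weak maximum condition (v) gives $\langle D_2\Phi_t(\hat{x}_t,\hat{u}_t),u_t-\hat{u}_t\rangle\le 0$ since $u_t\in U$. Substituting $f(x_t,u_t)=x_{t+1}$ and $f(\hat{x}_t,\hat{u}_t)=\hat{x}_{t+1}$ into the definition of $\Phi_t$ turns the tangent inequality into
$$\beta^t\bigl(\phi(x_t,u_t)-\phi(\hat{x}_t,\hat{u}_t)\bigr)\le \langle p_t,x_t-\hat{x}_t\rangle-\langle p_{t+1},x_{t+1}-\hat{x}_{t+1}\rangle,$$
valid for $t\in\N_*$; for $t=0$ the increment $x_0-\hat{x}_0$ vanishes because $x_0=\hat{x}_0=\sigma$, so (v) at $t=0$ yields $\phi(x_0,u_0)-\phi(\hat{x}_0,\hat{u}_0)\le-\langle p_1,x_1-\hat{x}_1\rangle$.

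The key observation is that the right-hand side telescopes. Summing from $t=0$ to $T$ I would obtain
$$\sum_{t=0}^{T}\beta^t\bigl(\phi(x_t,u_t)-\phi(\hat{x}_t,\hat{u}_t)\bigr)\le -\langle p_{T+1},x_{T+1}-\hat{x}_{T+1}\rangle,$$
and it remains to let $T\to+\infty$. The partial sums converge because hypothesis (iii) makes $(\phi(x_t,u_t))_t$ and $(\phi(\hat{x}_t,\hat{u}_t))_t$ bounded, the pairs staying in a bounded set as $\underline{x},\hat{\underline{x}}\in c_0(\N,\R^n)$ and $\underline{u},\hat{\underline{u}}\in\ell^{\infty}(\N,U)$; hence the $\beta^t$-weighted series are absolutely convergent and $J$ is well defined at both points. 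The boundary term vanishes in the limit, since $|\langle p_{T+1},x_{T+1}-\hat{x}_{T+1}\rangle|\le\|p_{T+1}\|_{\mathfrak{L}}\,\|x_{T+1}-\hat{x}_{T+1}\|\to 0$: indeed $\underline{p}\in\ell^1(\N_*,\R^{n*})$ forces $p_{T+1}\to 0$, and $x_{T+1}-\hat{x}_{T+1}\to 0$ because both sequences lie in $c_0(\N,\R^n)$. Passing to the limit gives $J(\underline{x},\underline{u})-J(\hat{\underline{x}},\hat{\underline{u}})\le 0$, and as $(\underline{x},\underline{u})$ was arbitrary, $(\hat{\underline{x}},\hat{\underline{u}})$ is a solution of (P1).

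I expect the main obstacle to be the careful bookkeeping at the two ends of the telescoping sum — isolating the $t=0$ term, where the state increment is zero but the control increment is not, and controlling the transversality quantity $\langle p_{T+1},x_{T+1}-\hat{x}_{T+1}\rangle$ at infinity — together with verifying that (iii) genuinely guarantees convergence of the two infinite series, so that the inequality between finite partial sums transfers to the limit.
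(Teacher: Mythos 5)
Your proof is correct and follows essentially the same route as the paper's: your $\Phi_t(x,u)=\beta^t\phi(x,u)+\langle p_{t+1},f(x,u)\rangle$ is exactly the paper's Pontryagin Hamiltonian $H_t(x,u,p_{t+1})$, and the paper likewise combines the concavity (tangent-line) inequality with $D_1H_t(\hat{x}_t,\hat{u}_t,p_{t+1})=p_t$ from (iv), the sign condition from (v), the substitution of the dynamics, a telescoping sum over $t=0,\dots,T$ anchored at $x_0=\hat{x}_0=\sigma$, and the limit $\langle p_{T+1},x_{T+1}-\hat{x}_{T+1}\rangle\to 0$ via $\underline{p}\in\ell^1$ and $\underline{x},\hat{\underline{x}}\in c_0$. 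Your bookkeeping (isolating $t=0$, justifying convergence of the series from (iii)) matches the paper's argument step for step.
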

\begin{proof}
Let $(\underline{x}, \underline{u})$ be an admissible process for (P1), i.e. $\underline{x} \in c_0(\N, \R^n)$, $\underline{u} \in {\ell}^{\infty}(\N, U)$, $x_{t+1} = f(x_t,u_t)$ for all $t \in \N$, and $x_0 = \sigma$. From (iii), since $\{ \phi(x_t,u_t) : t \in \N \}$ is bounded, $J(\underline{x}, \underline{u}) = \sum_{t = 0}^{+\infty} \beta^t \phi(x_t,u_t)$ exists in $\R$. 
From (ii) and (iv) we obtain
\begin{equation}\label{eq10.1}
D_1 H_t(\hat{x}_t, \hat{u}_t, p_{t+1}) = p_t.
\end{equation}
From (vi) we obtain, for all $t \in \N$,
\begin{equation}\label{eq10.3}
\left.
\begin{array}{r}
H_t((\hat{x}_t, \hat{u}_t, p_{t+1}) - H_t(x_t,u_t,p_{t+1}) \\
- \langle D_1 H_t(\hat{x}_t, \hat{u}_t, p_{t+1}), \hat{x}_t - x_t \rangle - \langle D_2 H_t(\hat{x}_t, \hat{u}_t, p_{t+1}), \hat{u}_t - u_t \rangle \geq 0.
\end{array}
\right\}
\end{equation}
From (v) the following relation holds for all $t \in \N$
\begin{equation}\label{eq10.4}
\langle D_2 H_t(\hat{x}_t, \hat{u}_t, p_{t+1}), \hat{u}_t - u_t \rangle \geq 0.
\end{equation}
For all $t \in \N$ we have
\[
\begin{array}{rcl}
\beta^t \phi (\hat{x}_t, \hat{u}_t) - \beta^t \phi(x_t,u_t) &=&  H_t( \hat{x}_t, \hat{u}_t, p_{t+1}) - \langle p_{t+1}, f( \hat{x}_t, \hat{u}_t) \rangle \\
\null & \null & -H_t(x_t,u_t, p_{t+1}) + \langle p_{t+1}, f(x_t,u_t)\\
\null& =& H_t( \hat{x}_t, \hat{u}_t, p_{t+1}) - H_t(x_t,u_t, p_{t+1}) \\
\null & \null & -\langle p_{t+1}, \hat{x}_{t+1} - x_{t+1} \rangle.
\end{array}
\]
Then, using (\ref{eq10.1}) and (\ref{eq10.4}) we obtain
\[
\begin{array}{rcl}
\beta^t \phi (\hat{x}_t, \hat{u}_t) - \beta^t \phi(x_t,u_t) &\geq & H_t( \hat{x}_t, \hat{u}_t, p_{t+1}) -H_t(x_t,u_t, p_{t+1}) \\
\null & \null & - \langle D_2H_t( \hat{x}_t, \hat{u}_t, p_{t+1}), \hat{u}_t - u_t \rangle\\
\null & \null & - \langle D_1H_{t+1}( \hat{x}_{t+1}, \hat{u}_{t+1}, p_{t+2}), \hat{x}_{t+1} - x_{t+1} \rangle  
\end{array}
\]
which implies
\[
\begin{array}{rcl}
\beta^t \phi (\hat{x}_t, \hat{u}_t) - \beta^t \phi(x_t,u_t) &\geq & [H_t( \hat{x}_t, \hat{u}_t, p_{t+1}) -H_t(x_t,u_t, p_{t+1}) \\
\null & \null & - \langle D_1H_t( \hat{x}_t, \hat{u}_t, p_{t+1}), \hat{x}_t - x_t \rangle\\
\null & \null & - \langle D_2H_t( \hat{x}_t, \hat{u}_t, p_{t+1}), \hat{u}_t - u_t \rangle ] \\
\null & \null & + [  \langle D_1H_t( \hat{x}_t, \hat{u}_t, p_{t+1}), \hat{x}_t - x_t \rangle\\
\null & \null & - \langle D_1H_{t+1}( \hat{x}_{t+1}, \hat{u}_{t+1}, p_{t+2}), \hat{x}_{t+1} - x_{t+1} \rangle ]
\end{array}
\]
and using (\ref{eq10.3}) we obtain
\[
\begin{array}{rcl}
\beta^t \phi (\hat{x}_t, \hat{u}_t) - \beta^t \phi(x_t,u_t) &\geq &[  \langle D_1H_t( \hat{x}_t, \hat{u}_t, p_{t+1}), \hat{x}_t - x_t \rangle\\
\null & \null & - \langle D_1H_{t+1}( \hat{x}_{t+1}, \hat{u}_{t+1}, p_{t+2}), \hat{x}_{t+1} - x_{t+1} \rangle ]
\end{array}
\]
Therefore, using (\ref{eq10.1}), we obtain, for all $T \in \N_*$, 
\[
\begin{array}{rcl}
\sum_{t=0}^T \beta^t \phi (\hat{x}_t, \hat{u}_t) -\sum_{t=0}^T  \beta^t \phi(x_t,u_t) &\geq &\langle D_1 H_0(\sigma, \hat{u}_0, p_1), \sigma - \sigma \rangle \\
\null & \null &- \langle p_{T+1}, \hat{x}_{T+1} - x_{T+1} \rangle \Longrightarrow
\end{array}
\]
\begin{equation}\label{eq10.5}
\sum_{t=0}^T \beta^t \phi (\hat{x}_t, \hat{u}_t) -\sum_{t=0}^T  \beta^t \phi(x_t,u_t) \geq - \langle p_{T+1}, \hat{x}_{T+1} - x_{T+1} \rangle.
\end{equation}
Since $\underline{p} \in {\ell}^1(\N_*, \R^{n*})$, we have $\lim_{T \rightarrow + \infty} p_{T+1} = 0$, and since $\hat{\underline{x}}, \underline{x} \in c_0(\N, \R^n)$ we have 
 $\lim_{T \rightarrow + \infty} (\hat{x}_{T+1} - x_{T+1}) = 0$ which implies $\lim_{T \rightarrow + \infty}( - \langle p_{T+1}, \hat{x}_{T+1} - x_{T+1} \rangle) = 0$, and then, from 
(\ref{eq10.5}), doing $T \rightarrow + \infty$ we obtain $J(\hat{\underline{x}}, \hat{\underline{u}}) - J (\underline{x}, \underline{u}) \geq 0$. And so we have proven that $(\hat{\underline{x}}, \hat{\underline{u}})$ is a solution of (P1).
\end{proof}
\begin{remark}\label{rem102}
The structure of the previous proof is inspired by the proof of Theorem 5.1 in \cite{BH2}. Note that our assumption (iii) permits to avoid to assume that $U$ is compact. Moreover note that we can replace the assumption (iii) by the condition: $U$ is closed.
\end{remark}
\begin{remark}\label{rem103}
Note that under our assumptions, the process $(\hat{\underline{x}}, \hat{\underline{u}})$ is also solution of the following  problem
\[
\left\{
\begin{array}{cl}
{\rm Mawimize} & \sum_{t=0}^{+ \infty} \beta^t \phi(x_t,u_t)\\
{\rm when}& \underline{x} \in {\ell}^{\infty}(\N, \R^n), \underline{u} \in {\ell}^{\infty}(\N, U)\\
{\rm and}& \forall t \in \N, x_{t+1} = f(x_t,u_t), x_0 = \sigma
\end{array}
\right.
\]
since, in the previous proof, when we obtain (\ref{eq10.5}), having $\hat{\underline{x}}$ and  $\underline{x}$ bounded is sufficient to obtain $\lim_{T \rightarrow + \infty}( - \langle p_{T+1}, \hat{x}_{T+1} - x_{T+1} \rangle) = 0$ and consequently to have the optimality of $(\hat{\underline{x}}, \hat{\underline{u}})$ for the last problem.
\end{remark}

\section{Sufficient conditions for (P)}
This section is devoted to the translation of the result of sufficient condition of optimality on (P1) into an analogous result on (P).\\
When $y_{\infty} \in \R^n$, we denotes by $c_{y_{\infty}}(\N, \R^n)$ the set of the sequences $\underline{y}$ in $\R^n$ such that $\lim_{t \rightarrow + \infty} y_t = y_{\infty}$. It is a complete affine subset of ${\ell}^{\infty}(\N, \R^n)$.
\vskip1mm
\begin{theorem}\label{th111}
Let $U$ be a nonempty convex subset of $\R^d$, $\beta \in (0,1)$, $\eta, y_{\infty} \in \R^n$, and two mappings $\psi : \R^n \times U \rightarrow \R$ and $g : \R^n \times U \rightarrow \R^n$. Let  $(\hat{\underline{y}}, \hat{\underline{u}}) \in  c_{y_{\infty}}(\N, \R^n) \times {\ell}^{\infty}(\N, U)$ and $\underline{p} \in {\ell}^1(\N_*, \R^{n*})$ which satisfy the following conditions.
\begin{itemize}
\item[(i)] For all $t \in \N$, $\hat{y}_{t+1} = g(\hat{y}_t, \hat{u}_t)$, and $\hat{y}_0 = \eta$.
\item[(ii)]  $\psi \in C^1(\R^n \times U, \R)$ and $g \in C^1(\R^n \times U, \R^n)$.
\item[(iii)] $\psi$ transforms bounded subsets of $\R^n \times U$ into bounded subsets of $\R$.
\item[(v)]  $p_t = p_{t+1} \circ D_1g(\hat{y}_t, \hat{u}_t) + \beta^t D_1 \psi (\hat{y}_t, \hat{u}_t)$  for all $t \in \N_*$.
\item[(vi)]  $\langle p_{t+1} \circ D_2 g(\hat{y}_t, \hat{u}_t) + \beta^t D_2 \psi(\hat{y}_t, \hat{u}_t) , u - \hat{u} \rangle \leq 0$ for all $u \in U$, for all $t \in \N$
\item[(vii)] The function $[(y,u) \mapsto \langle p_{t+1}, g(y, u) \rangle  + \beta^t \psi (y,u)]$ is concave on $\R^n \times U$ for all $t \in \N$.
\end{itemize}
Then $(\hat{\underline{y}}, \hat{\underline{u}})$ is a solution of (P).
\end{theorem}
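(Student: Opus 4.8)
The plan is to reduce Theorem~\ref{th111} to the already-established Theorem~\ref{th101} via the same change of variables used throughout the paper to pass between Problem (P) and Problem (P1). Concretely, I would set $\hat{x}_t := \hat{y}_t - y_{\infty}$ for all $t \in \N$, and define $\phi(x,u) := \psi(x + y_{\infty}, u)$ and $f(x,u) := g(x + y_{\infty}, u) - y_{\infty}$ exactly as in Section~2. The first task is to verify that $(\hat{\underline{x}}, \hat{\underline{u}})$ together with the same multiplier $\underline{p}$ satisfies all the hypotheses (i)--(vi) of Theorem~\ref{th101}; then that theorem yields the optimality of $(\hat{\underline{x}}, \hat{\underline{u}})$ for (P1), which translates back to the optimality of $(\hat{\underline{y}}, \hat{\underline{u}})$ for (P).

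The main body of the proof is thus a hypothesis-by-hypothesis translation. First I would note that since $\hat{\underline{y}} \in c_{y_{\infty}}(\N, \R^n)$, i.e. $\lim_{t \to +\infty} \hat{y}_t = y_{\infty}$, we have $\hat{\underline{x}} \in c_0(\N, \R^n)$, so the process lives in the correct space. Condition (i) of Theorem~\ref{th101} follows from (i) here by the computation $\hat{x}_{t+1} = \hat{y}_{t+1} - y_{\infty} = g(\hat{y}_t, \hat{u}_t) - y_{\infty} = f(\hat{x}_t, \hat{u}_t)$ and $\hat{x}_0 = \eta - y_{\infty} = \sigma$. The regularity conditions (ii) and (iii) for $\phi, f$ are immediate from (ii), (iii) for $\psi, g$, since $\phi, f$ differ from $\psi, g$ only by the fixed translation $y_{\infty}$ in the state argument (and a constant shift for $f$), which is a $C^1$ diffeomorphism preserving boundedness; in particular $D_1 \phi(x,u) = D_1 \psi(x + y_{\infty}, u)$, $D_2 \phi(x,u) = D_2 \psi(x + y_{\infty}, u)$, and likewise for $f$ and $g$. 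Evaluated at $(\hat{x}_t, \hat{u}_t) = (\hat{y}_t - y_{\infty}, \hat{u}_t)$, these partial derivatives coincide with those of $\psi, g$ at $(\hat{y}_t, \hat{u}_t)$, so the adjoint equation (iv) and the weak maximum inequality (v) of Theorem~\ref{th101} are verbatim restatements of (v) and (vi) here with the same $\underline{p}$.

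The one point deserving genuine care is the concavity hypothesis (vi) of Theorem~\ref{th101}, namely that $[(x,u) \mapsto \langle p_{t+1}, f(x,u) \rangle + \beta^t \phi(x,u)]$ is concave on $\R^n \times U$. I would derive this from (vii) by observing that
\[
\langle p_{t+1}, f(x,u) \rangle + \beta^t \phi(x,u) = \langle p_{t+1}, g(x + y_{\infty}, u) \rangle + \beta^t \psi(x + y_{\infty}, u) - \langle p_{t+1}, y_{\infty} \rangle,
\]
which is the composition of the concave function in (vii) with the affine bijection $(x,u) \mapsto (x + y_{\infty}, u)$, shifted by the constant $\langle p_{t+1}, y_{\infty} \rangle$; concavity is preserved under precomposition with an affine map and under additive constants, so (vi) of Theorem~\ref{th101} holds. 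I expect this concavity transfer to be the only step that is not completely mechanical, though it is still routine. Having checked all six hypotheses, Theorem~\ref{th101} gives that $(\hat{\underline{x}}, \hat{\underline{u}})$ solves (P1); since $J(\underline{x}, \underline{u}) = K(\underline{y}, \underline{u})$ under the correspondence $\underline{x} = \underline{y} - y_{\infty}$ and this correspondence is a bijection between the admissible sets of (P1) and (P) (as established in Section~2), the optimality transfers back and $(\hat{\underline{y}}, \hat{\underline{u}})$ is a solution of (P).
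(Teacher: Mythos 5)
Your proposal is correct and is essentially the paper's own proof: the paper reduces Theorem \ref{th111} to Theorem \ref{th101} in one line via the Section~2 change of variables $\hat{x}_t = \hat{y}_t - y_{\infty}$, $\phi(x,u)=\psi(x+y_{\infty},u)$, $f(x,u)=g(x+y_{\infty},u)-y_{\infty}$. You simply spell out the hypothesis-by-hypothesis verification (including the concavity transfer under the affine shift) that the paper leaves implicit.
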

\begin{proof}
Using Section 2, $\hat{x}_t = \hat{y}_t - y_{\infty}$ for all $t \in \N$, we see that $(\hat{\underline{x}}, \hat{\underline{u}}) \in c_0(\N, \R^n) \times {\ell}^{\infty}(\N, U)$ satisfies all the assumptions of Theorem \ref{th101}. And so $(\hat{\underline{x}}, \hat{\underline{u}})$ is a solution of (P1) which implies that $(\hat{\underline{y}}, \hat{\underline{u}})$ is a solution of (P).
\end{proof} 
\end{document}